\crefname{subsection}{Subsection}{Subsections}
\crefname{section}{Section}{Sections}
\numberwithin{theorem}{section}
\newcommand{\TheTitle}{Low-rank Linear Fluid-structure Interaction Discretizations}
\newcommand{\TheAuthors}{R. Weinhandl, P. Benner, and T. Richter}
\headers{\TheTitle}{\TheAuthors}
\title{{\TheTitle}\thanks{Submitted \today.
\funding{Funded by the Deutsche Forschungsgemeinschaft (DFG, German Research Foundation) - 314838170, GRK 2297 MathCoRe.}}}
\author{
  Roman Weinhandl\thanks{Otto von Guericke University Magdeburg and Max Planck Institute for Dynamics of Complex Technical Systems, Magdeburg, Germany
    (\email{weinhandl@mpi-magdeburg.mpg.de}).}
    \and
    Peter Benner
    \thanks{Max-Planck Institute for Dynamics of Complex Technical Systems, Magdeburg and Otto von Guericke University Magdeburg, Germany
    (\email{benner@mpi-magdeburg.mpg.de}).}
    \and
    Thomas Richter\thanks{Otto von Guericke University Magdeburg, Germany
    (\email{thomas.richter@ovgu.de}).}
  }
\DeclareMathOperator*{\diago}{diag}
\begin{document}

\maketitle

\begin{abstract}
Fluid-structure interaction models involve parameters that describe the solid and the fluid behavior. In simulations, there often is a need to vary these parameters to examine the behavior of a fluid-structure interaction model for different solids and different fluids. For instance, a shipping company wants to know how the material, a ship's hull is made of, interacts with fluids at different Reynolds and Strouhal numbers before the building process takes place. Also, the behavior of such models for solids with different properties is considered before the prototype phase. A parameter-dependent linear fluid-structure interaction discretization provides approximations for a bundle of different parameters at one step. Such a discretization with respect to different material parameters leads to a big block-diagonal system matrix that is equivalent to a matrix equation as discussed in \cite{paper_kressner_lowrank_krylov}.   The unknown is then a matrix which can be approximated using a low-rank approach that represents the iterate by a tensor. This paper discusses a low-rank GMRES variant and a truncated variant of the Chebyshev iteration. Bounds for the error resulting from the truncation operations are derived. Numerical experiments show that such truncated methods applied to parameter-dependent discretizations provide approximations with relative residual norms smaller than $10^{-8}$ within a twentieth of the time used by individual standard approaches.
\end{abstract}

\begin{keywords}
  Parameter-dependent fluid-structure interaction, GMREST, ChebyshevT, low-rank, tensor
\end{keywords}
\begin{AMS}
	65M22, 65F10, 65M15, 15A69
\end{AMS}
\section{Introduction}A parameter-dependent linear fluid-structure interaction problem as described in  \cref{sec_fsi_problem} discretized using bilinear finite elements with a total number of $M \in \mathbb{N}$ degrees of freedom (see \cref{section3} for details) and $m \in \mathbb{N}$ parameter combinations leads to equations of the form
\begin{align}
\label{equation_pardep_matrix1}
\big(A_0+\mu_s^iA_1+\lambda_s^iA_2+\rho_f^iA_3 \big)x_i=b_D \quad \text{for} \quad i \in \{1,...,m\}\text{,}
\end{align}
where the discretization matrices $A_0, A_1, A_2, A_3 \in \mathbb{R}^{M \times M}$ and the right hand side $b_D \in \mathbb{R}^M$ depends on the Dirichlet data and the $i$th finite element solution $x_i \in \mathbb{R}^M$. The samples of interest are given by the shear moduli $\mu_s^i \in \mathbb{R}$, the first Lam\'{e} parameters $\lambda_s^i \in \mathbb{R}$ and the fluid densities $\rho_f^i$ for $i \in \{1,...,m\}$. \par Equation \cref{equation_pardep_matrix1} can directly be written as the linear system
\begin{align}
\label{equation_bigblock_matrix1}
\mathcal{A}\left( \begin{array}{c}x_1 \\
\vdots \\
x_m
\end{array} \right)=\left( \begin{array}{c}b_D\\ \vdots \\ b_D
\end{array} \right)\text{,}
\end{align}
where $\mathcal{A} \in \mathbb{R}^{Mm \times Mm}$ is a block diagonal matrix. Following \cite{paper_kressner_lowrank_krylov}, equation \cref{equation_bigblock_matrix1} can then be translated into the matrix equation
\begin{align}\label{equation_matrix_eq1}
A_0X+A_1XD_1+A_2XD_2+A_3XD_3=B
\end{align} 
with $B:=\left[ \begin{array}{c|c|c}
b_D & \cdots & b_D
\end{array} \right]$ and diagonal matrices $ D_1, D_2, D_3 \in \mathbb{R}^{M \times M}$, where the $i$th diagonal entry of these diagonal matrices is given by $\mu_s^i, \lambda_s^i$ and $\rho_f^i$, respectively. In \cref{equation_matrix_eq1}, the unknown 
\begin{align*}
X=\left[ \begin{array}{c|c|c}x_1 & \cdots & x_m
\end{array} \right] \in \mathbb{R}^{M \times m}
\end{align*}
is a matrix. Now an iterative method to solve linear systems can be modified such that it uses an iterate that is a matrix. It is applied to the big system \cref{equation_bigblock_matrix1} but computation is kept in the matrix notation \cref{equation_matrix_eq1} by representing the iterate as a matrix instead of a vector. The methods used in this paper fix a rank $R \in \mathbb{N}$, $R \ll M,m$ and represent this iterate as a tensor. The goal is to find a low-rank approximation $\hat{X}$ of rank $R$
\begin{align*}
\hat{X}=\sum \limits_{j=1}^R u_j \otimes v_j^T \text{, } u_j \in \mathbb{R}^M \quad \text{and} \quad v_j \in \mathbb{R}^M  \text{ }\forall j \in \{1,...,R\} 
\end{align*}
that approximates the full matrix $X$ from \cref{equation_matrix_eq1} and therefore provides (parameter-dependent) finite element approximations for all equations in \cref{equation_pardep_matrix1}. 
\par Fluid-structure interaction problems yield non-symmetric system matrices. \linebreak Hence, the system matrix $\mathcal{A}$ in \cref{equation_bigblock_matrix1} is not symmetric. 
The methods examined in this paper are based on the GMRES method as introduced in a truncated variant in \cite{paper_gmres_ballani} and the Chebyshev method from \cite{paper_chebyshev_manteuffel}. These methods will then be compared to a truncated method based on the Bi-CGstab method from \cite{paper_bicgstab_van_der_vorst} similar to Algorithm 3 of \cite{paper_kressner_lowrank_krylov} and Algorithm 2 of \cite{bennbreit13}. 
\par In \cref{sec_fsi_problem} and  \cref{section3}, we derive the matrix equations that appear when dealing with  parameter-dependent fluid-structure interaction discretizations. The low-rank framework and related methods are introduced in \cref{chap4_lrank1} for stationary problems and generalized to non-stationary problems in \cref{chap5_time1}. In \cref{chapter_error1}, theoretical error bounds for the GMREST and the ChebyshevT method are derived and numerically evaluated in \cref{chap_numev1}. The convergences of the truncated approaches presented are compared in numerical experiments in \cref{chap_numex1}. 
\section{The Stationary Linear Fluid-structure Interaction Problem}
\label{sec_fsi_problem}
Let $d \in \{2,3\}$, $\Omega \subset \mathbb{R}^d$, $F$, $S \subset \Omega$ such that $\bar{F} \cup \bar{S} =\bar{\Omega}$ and $F \cap S=\emptyset$, where $F$ represents the fluid and $S$ the solid part. Let $\Gamma_{\text{int}}=\partial F \cap \partial S$ and $\Gamma_f^{\text{out}} \subset \partial F \setminus \partial S$ denote the boundary part where Neumann outflow conditions hold.  $\Gamma_f^D = \partial F \setminus (\Gamma_f^{\text{out}} \cup \Gamma_{\text{int}})$ denotes the boundary part where Dirichlet conditions hold. Consider the Stokes fluid equations from Section 2.4.4 of \cite{richter_fsi1} as a model for the fluid part and the Navier-Lam\'{e} equations discussed as Problem 2.23 of \cite{richter_fsi1} as a model for the solid part. Both equations are assumed to have a vanishing right hand side. If these two equations are coupled with the kinematic and the dynamic coupling conditions discussed in Section 3.1 of \cite{richter_fsi1}, the weak formulation of the stationary coupled linear fluid-structure interaction problem reads
\begin{equation}\label{problem_linear_fsi1}
\begin{aligned}
\langle \nabla \cdot v, \xi \rangle_F &=0 \text{,}\\
\mu_s \langle \nabla u + \nabla u^T, \nabla \varphi \rangle_S+\lambda_s \langle \operatorname{tr}(\nabla u)I,\nabla \varphi \rangle_S + \nu_f \rho_f \langle \nabla v+\nabla v^T, \nabla \varphi \rangle_F\\-\langle p, \nabla \cdot \varphi \rangle_F &=0 \text{,}\\
\langle \nabla u, \nabla \psi \rangle_F&=0 \text{,}
\end{aligned}
\end{equation}with the trial functions $v \in v_{\text{in}}+H_0^1(\Omega, \Gamma_f^D \cup \Gamma_{\text{int}})^d$ (velocity), where $v_{\text{in}}\in H^1(\Omega)^d$ is an extension of the Dirichlet data on $\Gamma_f^D$, $u \in H^1_0(\Omega)^d$ (deformation) and $p \in L^2(F)$ (pressure) and the test functions $\xi \in L^2(F)$ (divergence equation), $\varphi \in H^1_0(\Omega, \partial \Omega \setminus \Gamma_f^{\text{out}})^d$ (momentum equation) and $\psi \in H^1_0(F)^d$ (deformation equation). $\langle \cdot , \cdot \rangle_S$ and $\langle \cdot , \cdot \rangle_F$ denote the $\mathcal{L}^2$ scalar product on $S$ and $F$, respectively. $\nu_f \in \mathbb{R}$ denotes the kinematic fluid viscosity and $\rho_f \in \mathbb{R}$ the fluid density. The shear modulus $\mu_s \in \mathbb{R}$ and the first Lam\'{e} parameter $\lambda_s \in \mathbb{R}$ determine the Poisson ratio of the solid.
\begin{definition}[The Poisson Ratio {- Definition 2.18 of \cite{richter_fsi1}}] The Poisson ratio of a solid is given by the number
\begin{align*}
\nu_s^p=\frac{\lambda_s}{2(\lambda_s+\mu_s)} \text{.}
\end{align*}
It describes the compressibility of a solid.
\end{definition}
\section{Parameter-dependent Discretization}\label{section3}
Assume the behavior of a linear fluid-structure interaction model for $m_1 \in \mathbb{N}$ shear moduli, $m_2 \in \mathbb{N}$ first Lam\'{e} parameters and $m_3 \in \mathbb{N}$ fluid densities is of interest. The kinematic fluid viscosity $\nu_f \in \mathbb{R}$ is assumed to be fixed. Let the samples of interest be given by the following sets.
\begin{equation*}
\begin{aligned}
\{\mu_s^{i_1}\}_{{i_1} \in \{1,...,m_1\}} \subset \mathbb{R}^+&\text{, a set of shear moduli,}\\
\{\lambda_s^{i_2}\}_{{i_2} \in \{1,...,m_2\}} \subset \mathbb{R}^+ &\text{, a set of first Lam\'{e} parameters and}\\
\{\rho_f^{i_3}\}_{{i_3} \in \{1,...,m_3\}} \subset \mathbb{R}^+ &\text{, a set of fluid densities.}
\end{aligned}
\end{equation*}\par
In a bilinear finite element discretization of \cref{problem_linear_fsi1} with a mesh grid size of $N \in \mathbb{N}$, every mesh grid point corresponds to a pressure, a velocity and a deformation variable. In two dimensions, the velocity and deformation are two dimensional vectors, in three dimensions they correspond to a three dimensional vector each. The total number of degrees of freedom is therefore $M=5N$ in two dimensions and $M=7N$ in three dimensions.\par
Let $\Omega_h$ be a matching mesh of the domain $\Omega$ as defined in Definition 5.9 of \cite{richter_fsi1} with $N$ mesh grid points. $A_0 \in \mathbb{R}^{M \times M}$ is a discrete differential operator restricted to the finite element space with dimension $M$. It discretizes all operators involved in \cref{problem_linear_fsi1} with a fixed shear modulus $\mu_s \in \mathbb{R}$, a fixed first Lam\'{e} parameter $\lambda_s \in \mathbb{R}$ and a fixed fluid density $\rho_f \in \mathbb{R}$. In this paper, $Q_1$ finite elements as discussed in Section 4.2.1 of \cite{richter_fsi1} are used and we will denote the discrete differential operators by discretization matrices. Moreover, let $A_1, A_2, A_3\in \mathbb{R}^{M \times M}$ be the  discretization matrices of the following operators:
\begin{align*}
A_1 \quad \text{ discretizes } \quad &\langle \nabla u + \nabla u^T, \nabla \varphi \rangle_S \text{, }\\
A_2 \quad  \text{ discretizes } \quad & \langle \operatorname{tr}(\nabla u)I,\nabla \varphi \rangle_S \qquad \text{ and } \\
A_3 \quad \text{ discretizes } \quad  &\langle \nabla v + \nabla v^T, \nabla \varphi \rangle_F \text{.}
\end{align*}
\par
The parameter-dependent equation
\begin{equation}\label{equation_pardep_matrix2}
\begin{aligned}
\underbrace{\big(A_0+(\mu_s^{i_1}-\mu_s)A_1+(\lambda_s^{i_2}-\lambda_s)A_2+ \nu_f(\rho_f^{i_3}-\rho_f)A_3\big)}_{=:A(\mu_s^{i_1},\lambda_s^{i_2}, \rho_f^{i_3})}x_{i_1i_2i_3}=b_D  \hspace{2cm}\\
\text{for} \quad (i_1,i_2,i_3) \in \{1,...,m_1\} \times \{1,...,m_2\} \times \{1,...,m_3\}
\end{aligned}
\end{equation}
is the finite element discretization of \cref{problem_linear_fsi1} related to a shear modulus $\mu_s^{i_1}$, a first Lam\'{e} parameter $\lambda_s^{i_2}$ and a fluid density $\rho_f^{i_3}$. The finite element solution is $x_{i_1i_2i_3} \in \mathbb{R}^{M}$ and the right hand side $b_D \in \mathbb{R}^{M}$ depends on the Dirichlet data.
\begin{remark}
If the fixed parameters vanish, namely $\mu_s=\lambda_s=\rho_f=0$, \cref{equation_pardep_matrix2} translates to
\begin{align*}
\big(A_0+\mu_s^{i_1}A_1+\lambda_s^{i_2}A_2+&\nu_f\rho_f^{i_3}A_3\big)x_{i_1i_2i_3}=b_D \quad \text{for} \\ &(i_1,i_2,i_3)\in \{1,...,m_1\} \times \{1,...,m_2\} \times \{1,...,m_3\} \text{.}
\end{align*}
At first sight, this presentation seems to be more convenient. But choosing, for instance, the parameters $\mu_s=\mu_s^1$, $\lambda_s=\lambda_s^1$ and $\rho_f=\rho_f^1$ minimizes the number of nonzero entries in the diagonal matrices $D_1$, $D_2$, $D_3 \in \mathbb{R}^{m \times m}$ that will be introduced in \cref{equation_diagonal_samplemat1}. From a numerical point of view, this is an advantage. Furthermore, vanishing fixed parameters would lead to a singular matrix $A_0$. This can become a problem if the preconditioner $\mathcal{P}_{A_0}$ from  \cref{subchapter_preconditioner1} is used.
\end{remark}
Combining all sample combinations in \cref{equation_pardep_matrix2} leads to a total of $m=m_1m_2m_3$ equations. Written as a linear system, these equations translate to
\begin{align}\label{equation_big_block_matrix2}
\overbrace{\diago \limits_{\makebox[0pt]{$ \substack{i_1 \in \{1,...,m_1\}\\ i_2 \in \{1,...,m_2\}\\ i_3 \in \{1,...,m_3\}}$}}
\big(A(\mu_s^{i_1}, \lambda_s^{i_2}, \rho_f^{i_3})\big)}^{=:\mathcal{A}}
 \left( \begin{array}{c}x_1 \\ \vdots \\ x_{m_1m_2m_3}
\end{array} \right) = \left( \begin{array}{c}b_D \\ \vdots \\ b_D
\end{array} \right) \text{,}
\end{align}
where $\operatorname{diag}(\cdot)$ denotes the operator introduced in Section 1.2.6 of \cite{book_golub_matrix1} extended to block diagonalization. Even though $\mathcal{A} \in \mathbb{R}^{Mm \times Mm} $ is of block diagonal structure, solving the blocks on the diagonal $m$ times (potentially in parallel) is often not feasible. If $100$ samples per parameter are considered, one would have to face $100^3=10^6$ blocks already in such a direct approach. This would lead to huge storage requirements for the solution vectors.
\subsection{The Matrix Equation} 
The diagonal entries of $D_1$, $D_2$ and $D_3 \in \mathbb{R}^{m \times m}$ are $\mu_s^{i_1}-\mu_s$, $\lambda_s^{i_2}-\lambda_s$ and $\rho_f^{i_3}-\rho_f$, respectively. The order of the diagonal entries has to be chosen such that every parameter combination occurs only once. If $I_{m_1} \in \mathbb{R}^{m_1 \times m_1}$  denotes the $m_1 \times m_1$ identity matrix, a possible sample order would lead to matrices
\begin{equation}\label{equation_diagonal_samplemat1}
\begin{aligned}
D_1&=I_{m_2m_3} \otimes  \diago \limits_{\makebox[0pt]{$\substack{i_1 \in \\ \{1,...,m_1\}}$}} (\mu_s^{i_1}) 
 \text{, }\quad  D_2=I_{m_3} \otimes  \diago \limits_{\makebox[0pt]{$\substack{i_2 \in \\ \{1,...,m_2\}}$}}(\lambda_s^{i_2})  \otimes I_{m_1} \\ \text{and} \quad D_3&=\diago \limits_{\makebox[0pt]{$\substack{i_3 \in \\ \{1,...,m_3\}}$}} (\rho_f^{i_3}) \otimes I_{m_1m_2} \text{.}
\end{aligned}
\end{equation}
As discussed in \cite{paper_kressner_lowrank_krylov}, equation \cref{equation_bigblock_matrix1} can then be written as the matrix equation
\begin{equation} \label{equation_matrix_eq2}
\underbrace{A_0+A_1XD_1+A_2XD_2+\nu_f A_3XD_3}_{=:F(X)}=B:=[b_D|\cdots | b_D]
\end{equation}
where the unknown is the matrix $X=[x_1|\cdots|x_{m_1m_2m_3}] \in \mathbb{R}^{M\times m}$ whose $i$th column corresponds to the finite element approximation of \cref{problem_linear_fsi1} related to the $i$th sample combination.
\begin{definition}[Vector and Matrix Notation] We refer to the representation (\ref{equation_big_block_matrix2}) and (\ref{equation_matrix_eq2}) as the vector and the matrix notation, respectively. In (\ref{equation_big_block_matrix2}), the unknown is a vector whereas in (\ref{equation_matrix_eq2}), the unknown is a matrix. Even though both equations express the same, for the theoretical proofs in \cref{chapter_error1}, the vector notation is more suitable since considering spaces that are spanned by matrices is rather uncommon. On the other hand, software implementations exploit the low-rank structure of the matrix $X$ in (\ref{equation_matrix_eq2}). This is why the matrix notation fits better in these cases.
\end{definition}
In (\ref{equation_pardep_matrix2}), the right hand side $b_D$ does not depend on any parameter and $A(\mu_s^{i_1},\lambda_s^{i_2},\rho_f^{i_3})$ depends linearly on each parameter. Assume that $A(\mu_s^{i_1},\lambda_s^{i_2},\rho_f^{i_3})$ is invertible for all parameters. In this case, Theorem 3.6 of \cite{paper_kressner_lowrank_krylov} is applicable and provides existence of a low-rank approximation of $X$ in (\ref{equation_matrix_eq2}) with an error bound that implies a stronger error decay than any polynomial in the rank $R$. However, the constant $C$ in Theorem 3.6 of \cite{paper_kressner_lowrank_krylov} can become very big but we do not want to go into detail here and refer the interested reader to \cite{paper_kressner_lowrank_krylov}.
\subsection{Preconditioners} \label{subchapter_preconditioner1}
The system matrix $\mathcal{A}$ has the structure
\begin{align*}
\mathcal{A}=I_m \otimes A_0+D_1 \otimes A_1 + D_2 \otimes A_2 + \nu_f D_3 \otimes A_3 \text{.}
\end{align*}
Promising choices of preconditioners that were  used already in \cite{paper_kressner_lowrank_krylov} are
\begin{align*}
\mathcal{P}_{A_0}:=I_m \otimes A_0
\end{align*}
or
\begin{align*}
\mathcal{P}_T:=I_m \otimes \underbrace{\big(A_0+\bar{\mu}_sA_1+\bar{\lambda}_s A_2+\nu_f \bar{\rho}_fA_3 \big)}_{=:P_T}
\end{align*}with the means
\begin{align*}
\bar{\mu}_s&=\frac{\min \limits_{i_1 \in \{1,...,m_1\}} (\mu_s^{i_1}-\mu_s) + \max \limits_{i_1 \in \{1,...,m_1\}}(\mu_s^{i_1}-\mu_s) }{2} \text{,}\\ \bar{\lambda}_s&=\frac{\min \limits_{i_2 \in \{1,...,m_2\}}(\lambda_s^{i_2}-\lambda_s) +\max \limits_{i_2 \in \{1,...,m_2\}}(\lambda_s^{i_2}-\lambda_s) }{2} \quad \text{ and}\\
\bar{\rho}_f &=\frac{\min \limits_{i_3 \in \{1,...,m_3\}}(\rho_f^{i_3}-\rho_f)+\max \limits_{i_3 \in \{1,...,m_3\}} (\rho_f^{i_3}-\rho_f) }{2} \text{.}
\end{align*}
The preconditioner $\mathcal{P}_T$ usually provides faster convergence than $\mathcal{P}_{A_0}$, especially if the means $\bar{\mu}_s, \bar{\lambda}_s$ and $\bar{\rho}_f$ are big. Left multiplication of $\mathcal{P}_T^{-1}$  with 
\begin{align*}
\mathcal{A}\left( \begin{array}{c} x_1 \\ \vdots \\ x_{m_1m_2m_3}
\end{array} \right)
\end{align*}is equivalent to application of $P_T^{-1}$ to $F(X)$ from the left using the matrix notation from \cref{equation_matrix_eq2}.
\section{The Low-rank Methods}
\label{chap4_lrank1}
Now, we discuss iterative methods that can be applied to solve the big system \cref{equation_big_block_matrix2}. The iterate is then a vector $x \in \mathbb{R}^{Mm}$. But if the iterate is represented as a matrix instead of a vector, computation can be kept in the matrix notation from \cref{equation_matrix_eq2}. For instance, the matrix-vector multiplication in such a global approach corresponds to the evaluation of the function $F(\cdot)$ from \cref{equation_matrix_eq2}. The Euclidean norm of the vector
\begin{align*}
\left( \begin{array}{c}x_1 \\ \vdots \\ x_{m_1m_2m_3}
\end{array} \right)
\end{align*}
from \cref{equation_big_block_matrix2} then equals the Frobenius norm of the matrix $X$ in \cref{equation_matrix_eq2}, $\|X\|_F$. Low-rank methods that use this approach can be based on many methods such as the Richardson iteration or the conjugate gradient method as discussed in Algorithm 1 and Algorithm 2 of \cite{paper_kressner_lowrank_krylov}. But since for fluid-structure interaction problems, the matrix $\mathcal{A}$ is not symmetric, the focus in this paper lies on methods that base on the GMRES and the Chebyshev method. As proved in Theorem 35.2 of \cite{trefethen_numlinalg1}, the GMRES method converges in this case, and so does the Chebyshev method, if all eigenvalues of the system matrix lie in an ellipse that does not touch the imaginary axis as proved in \cite{paper_chebyshev_manteuffel}. Also, the Bi-CGSTAB method from \cite{paper_bicgstab_van_der_vorst} is considered for a numerical comparison.\par
As mentioned, the low-rank methods discussed in this paper use an iterate that is, instead of a matrix, a tensor of order two. The iterate is then given by
\begin{align*}
\hat{X}=\sum \limits_{j=1}^R=(u_j \otimes v_j^T)  \quad \text{with} \quad u_j \in \mathbb{R}^M \text{, } v_j \in \mathbb{R}^m \text{ } \forall j \in \{1,...,R\} ,
\end{align*}
where the tensor rank $R \in \mathbb{N}$ is kept small such that $R \ll M, m$. The goal of the method is to find a low-rank approximation $\hat{X}$ that approximates the matrix $X$ in \cref{equation_matrix_eq2}. The methods GMREST (also mentioned in \cite{paper_gmres_ballani}) and ChebyshevT are such methods and will be explained in the following. They are not just faster than the standard methods applied to $m$ individual equations of the form \cref{equation_pardep_matrix2}, they also need a smaller amount of storage to store the approximation. If $M$ and $m$ are very big, this plays an important role since the storage amount to store $\hat{X}$ is in $O\big((M+m)R\big)$ while the storage amount to store the full matrix $X$ is in $O(Mm)$. 
\subsection{Tensor Format and Truncation}
There are several formats available to represent the tensor $\hat{X}$. For $d=2$, the hierarchical Tucker format (Definition 11.11 of \cite{hackbusch_tensor1}) is equivalent to the Tucker format. It is based on so called minimal subspaces that are explained in Chapter 6 of \cite{hackbusch_tensor1}.
\begin{definition}[Tucker Format {- Definition 8.1 of \cite{hackbusch_tensor1}} for $d=2$] Let $V:=\mathbb{R}^{M} \otimes \mathbb{R}^m$, $(r_1,r_2) \in \mathbb{N}^2$. For $d=2$, the Tucker tensors of Tucker rank $(r_1,r_2)$ are given by the set
\begin{align*}
T_{(r_1,r_2)}(V):=\{v \in V_1 \otimes V_2 : V_1 \subset \mathbb{R}^M \text{, } \operatorname{dim}(V_1)=r_1\text{, } V_2 \subset \mathbb{R}^m \text{, } \operatorname{dim}(V_2)=r_2 \} \text{.}
\end{align*}
From now on, the set $T_{(R,R)}(V)$ will be denoted by $T_R$. By a tensor of rank $R$, a Tucker tensor in $T_{(R,R)}$ is addressed in the following.
\end{definition}
As explained in Section 13.1.4 of \cite{hackbusch_tensor1}, summation of two arbitrary Tucker tensors of rank $R$, in general, results in a Tucker tensor of rank $2R$. But to keep a low-rank method fast, the rank of the iterate has to be kept small. This induces the need for a truncation operator.
\begin{definition}[Truncation Operator] \label{definition_truncation_operator1}
The truncation operator
\begin{align*}
\mathcal{T}: \mathbb{R}^M \otimes \mathbb{R}^m \rightarrow T_R
\end{align*}
maps a Tucker or a full tensor into the set of Tucker tensors of rank $R$. The truncation operator is, in the case of tensors of order $2$, based on the singular value decomposition and projects its arguments to $T_R$. For further reading, we refer to Definition 2.5 of \cite{paper_grasedyck_hierarchical_svd_of_tensors}.
\end{definition}
\begin{remark} As proved in Section 3.2.3 of  \cite{hackbusch_tensor1}, it holds
\begin{align*}
\mathbb{R}^M \otimes \mathbb{R}^m \cong \mathbb{R}^{M \times m} \text{,}
\end{align*}
where the relation $\cdot \cong \cdot$ denotes spaces that are isomorphic to each other (see Section 3.2.5 of \cite{hackbusch_tensor1}). Since, for our purposes, we consider a matrix that is represented by a tensor, we assume 
\begin{align*}
\mathcal{T}: \mathbb{R}^{M \times m} \rightarrow T_R \text{.}
\end{align*}
and if
\begin{align*}
\hat{x} \in T_R \text{,}
\end{align*}
by $\hat{x}$, the full representation of the tensor in $\mathbb{R}^{Mm}$ in vector notation is addressed.
\end{remark}
Before we proceed, one more definition is needed.
 \begin{definition}[Vectorization restricted to $\mathbb{R}^{M \times m}$] The vectorization operator
\begin{align*}
\operatorname{vec} : \mathbb{R}^{M \times m} \rightarrow \mathbb{R}^{Mm} \text{, } 
\operatorname{vec} \left( \begin{array}{c|c|c}
v_1&\cdots&v_m
\end{array} \right) \mapsto \left( \begin{array}{c}v_1 \\ \vdots \\ v_m
\end{array} \right)
\end{align*}
stacks matrix entries column wise into a vector. Its inverse maps to an $M \times m$ matrix:
\begin{align*}
\operatorname{vec}^{-1}: \mathbb{R}^{Mm} \rightarrow
 \mathbb{R}^{M \times m} \text{, } \operatorname{vec}^{-1} \left( \begin{array}{c}v_1 \\ \vdots \\ v_m 
\end{array}  \right) =(v_1 | \cdots | v_m) \text{.}
\end{align*}
\end{definition}
\begin{remark}
The argument of the function $F(\cdot)$ from \cref{equation_matrix_eq2} is tacitly assumed to be a matrix so $F(\hat{x})$ addresses $F\big(\operatorname{vec}^{-1}(\hat{x})\big)$ for $\hat{x} \in T_R$.
\end{remark}
 Since truncation is an operation that is applied after nearly every addition of tensors and multiple times in every iteration, the format that provides the  truncation with the least complexity is often the preferred one. According to Algorithm 6 of \cite{manual_kressner_htucker1}, the htucker toolbox \cite{manual_kressner_htucker1} for MATLAB\textsuperscript{\textregistered} provides truncation with complexity $(2\max(M,m)R^2+2R^4)$ if the input format is in hierarchical Tucker format. The truncation complexity of the TT toolbox \cite{paper_oseledets_tensor_train1} for MATLAB that uses the Tensor Train format is in $O(2\max(M,m)R^3)$ as stated in Algorithm 2 of \cite{paper_oseledets_tensor_train1}.
\subsection{The GMREST and the GMRESTR Method} \label{section_gmrest1}
Consider $\mathcal{A}$ from \cref{equation_big_block_matrix2}, a suitable preconditioner $\mathcal{P}=I_m \otimes P \in \mathbb{R}^{Mm \times Mm}$, a start vector $x_0 \in \mathbb{R}^{Mm}$ and
\begin{align*}
b:=\left(\begin{array}{c}b_D \\ \vdots \\ b_D
\end{array} \right) \text{,}\quad
r_0:=\mathcal{P}^{-1} ( b -\mathcal{A}x_0 ) \text{.}
\end{align*}
$l$ GMRES iterations with the preconditioner $\mathcal{P}$ applied to the system \cref{equation_big_block_matrix2} minimize $\|r_0-\mathcal{P}^{-1}\mathcal{A}z\|_2$ for $z \in \mathbb{R}^{Mm}$ over the Krylov subspace (compare Section 6.2 of \cite{saad_iterative})
\begin{align*}
\mathcal{K}_l :=\operatorname{span}\{r_0, \mathcal{P}^{-1}\mathcal{A}r_0,...,(\mathcal{P}^{-1}\mathcal{A})^{l-1} r_0\} \text{.}
\end{align*}
\newpage
As mentioned before, from the theoretical point of view, this classical GMRES method is equivalent to the global GMRES method that uses an iterate that is a matrix instead of a vector. But if the iterate is represented by a tensor of a fixed rank $R$, the truncation operator $\mathcal{T}$ generates an additional error every time it is applied to truncate the  iterate or tensors involved back to rank $R$. With an initial guess $\hat{x}_0:=\mathcal{T}(x_0)$, 
\begin{align*}
\hat{b}:=\mathcal{T} (b) \quad  \text{and} \quad
\hat{r}_0:=\mathcal{T}\big(P^{-1} [ \hat{b} -F(\hat{x}_0 ) ] \big)\text{,}
\end{align*}
$l$ iterations of the truncated GMRES method GMREST that is coded in \cref{algorithm_gmres_truncated1} minimize $\|\operatorname{vec}\Big(\mathcal{T}\big(\hat{r}_0-P^{-1} F(\hat{z}) \big) \Big)\|_2$ for $\hat{z} \in T_R$ over the truncated Krylov subspace
\begin{align*}
\mathcal{K}_l^{\mathcal{T}}:=\operatorname{span}\{ \operatorname{vec}(\hat{r}_0),\operatorname{vec}\Big(\mathcal{T}\big( P^{-1}F(\hat{r}_0) \big)\Big),...,\operatorname{vec}\Big(\big(\mathcal{T}(P^{-1}F)\big)^{l-1}(\hat{r}_0) \Big) \} \text{.}
\end{align*}\begin{algorithm}[t]
\caption{GMREST($l$)  (Preconditioned Truncated GMRES Method)} \label{algorithm_gmres_truncated1}
\begin{algorithmic}
  \REQUIRE{Iteration number $l$, truncation rank $R$ for $\mathcal{T}$, $F(\cdot)$ from \cref{equation_matrix_eq2}, left preconditioner $P \in \mathbb{R}^{M \times M}$, right hand side $\hat{B}\in T_R$ and start matrix $\hat{X} \in T_R$}
    \ENSURE{Approximate solution $\hat{X} \in T_R$}
    \STATE Find $\hat{R} \in T_R$ such that $P\hat{R}=\mathcal{T}\big(\hat{B}-F(\hat{X})\big)$. \\
$z:=\left(  \begin{array}{cccc} \| \hat{R} \|_F & 0 & \cdots & 0 \end{array} \right)^T$
\STATE 
$\hat{V}_1:=\frac{\hat{R}}{\|\hat{R}\|_F}$
\FOR{$i = 1,...,l$} 
\STATE Find $\hat{W} \in T_R$ such that $P \hat{W}=\mathcal{T}\big(F(\hat{V}_i)\big)$.
\FOR{$k=1,...,i$}
\STATE $H_{k,i}:=\operatorname{trace}(\hat{V}_k^H \hat{W})$
\STATE $\hat{W}:=\mathcal{T}(\hat{W}-H_{k,i}\hat{V}_k)$
\ENDFOR
\STATE $H_{i+1,i}:=\|\hat{W}\|_F$
\STATE $\hat{V}_{i+1}:=\hat{W} \frac{1}{H_{i+1,i}}$
\ENDFOR
\STATE Now find a unitary matrix $Q$ such that $QH$ is an upper triangular matrix via Givens rotations. Find $y$ such that $QHy=Qz$.
\STATE $\hat{X}=\mathcal{T}\big(\hat{X}+\sum \limits_{j=1}^l y_j \hat{V}_j\big)$
    \end{algorithmic}
\end{algorithm}
\begin{algorithm}[t]
\caption{GMRESTR($l, d$)  (Preconditioned Truncated GMRES Restart Method)} \label{algorithm_gmrestr_truncated1}
\begin{algorithmic}
  \REQUIRE{In addition to the inputs of \cref{algorithm_gmres_truncated1}, a divisor $d \in \mathbb{N}$}
    \ENSURE{Approximate solution $\hat{X} \in T_R$}
        \STATE $d_1:=\operatorname{floor}(\frac{l}{d})$ 
        \FOR{$i=1,...,i$}
\STATE $\hat{X}=$GMREST$(d)$ with start matrix $\hat{X}$
\ENDFOR
       
\end{algorithmic}
\end{algorithm} 
\cref{algorithm_gmres_truncated1} is a translation of a preconditioned variant of Algorithm 6.9 of \cite{saad_iterative} to the low-rank framework. The Arnoldi iteration is used to compute an orthogonal basis of $\mathcal{K}_l^\mathcal{T}$. The operations involved are translated from the vector notation to the matrix notation. Therefore, the Euclidean norm of a vector translates to the Frobenius norm of a matrix. The scalar product of two vectors corresponds to the Frobenius scalar product of two matrices, 
\begin{align*}
v^H \cdot w = \operatorname{trace}\big(\operatorname{vec}^{-1}(v)^H \operatorname{vec}^{-1}(w) \big) \qquad \text{for} \qquad v\text{, } w \in \mathbb{R}^{Mm}\text{.}
\end{align*}
\par But even the standard GMRES method can stagnate due to machine precision. This means that at the $l$th iteration, the dimension of the numerical approximation of $\mathcal{K}_l$ is smaller than $l$. As we will see later, the truncation operator brings, in addition to the finite precision error (round-off), a truncation error into play. As a result, the GMREST method can stagnate  much earlier than the non truncated full approach. As in the full approach, restarting the method with the actual iterate as initial guess can be a remedy. This restarted variant of the GMREST method, called GMRESTR here, is coded in \cref{algorithm_gmrestr_truncated1}.
\subsection{The ChebyshevT Method} \label{subsection_chebyshev1}
The Chebyshev method converges for non-symmetric system matrices if, in the complex plane, the eigenvalues can be encircled by an ellipse that does not touch the imaginary axis.
\par The diagonal blocks of the preconditioned system matrix $\mathcal{P}_T^{-1} \mathcal{A}$ are\begin{equation}\label{equation_chebyshevt_pre_bl1}
\begin{aligned}
Bl(i_1,i_2,i_3):=P_T^{-1} \big(A_0&+(\mu_s^{i_1}-\mu_s)A_1+(\lambda_s^{i_2}-\lambda_s)A_2+\nu_f(\rho_f^{i_3}-\rho_f)A_3\big) \\ &\text{for} \quad 
(i_1,i_2,i_3) \in \{1,...,m_1\} \times \{1,...,m_2\} \times \{1,...,m_3\} \text{.}
\end{aligned}
\end{equation}
Moreover, the parameter-dependent matrices \cref{equation_pardep_matrix2} are assumed to be invertible. The eigenvalues of $\mathcal{P}_T^{-1}\mathcal{A}$ denoted by $\Lambda(\mathcal{P}_T^{-1}\mathcal{A})$ therefore coincide with the set
\begin{align}
\label{equation_set_eigenvalues1}
\bigcup \limits_{\makebox[20pt]{$\substack{i_1 \in \{1,...,m_1\}\\i_2 \in \{1,...,m_2\} \\ i_3 \in \{1,...,m_3\}}$}} \Lambda\big(Bl(i_1,i_2,i_3)\big) \text{.}
\end{align}
In numerical tests, it turned out that $R_\Lambda:=\{\operatorname{Re}(\alpha): \alpha \in \Lambda(\mathcal{P}_T^{-1}\mathcal{A})\} \subset (0,\infty)$ for the linear fluid-structure interaction problems considered and the maximum and the minimum of $R_\Lambda$ do not depend on the number of degrees of freedom, where the operator $\operatorname{Re}(\alpha)$ returns the real part of a complex number $\alpha \in \mathbb{C}$. Unfortunately, so far it is not clear how to derive a useful bound for $R_\Lambda$ away from $0$ and from above. For a discretization, the quantities\begin{align}\label{equation_lambdamaxmin_cheb1}
\Lambda_{\text{max}}:=\max \{|\alpha| : \alpha \in 
 \Lambda(\mathcal{P}_T^{-1} \mathcal{A}) \} \quad \text{and} \quad \Lambda_{\text{min}}:=\min  \{|\alpha| :  \alpha \in \Lambda(\mathcal{P}_T^{-1} \mathcal{A}) \} 
\end{align}\begin{algorithm}[t]
\caption{ChebyshevT($l, d, c$) (Preconditioned Truncated Chebyshev Method)} \label{algorithm_chebyshevt_truncated1}
\begin{algorithmic}
  \REQUIRE{Iteration number $l$, ellipse by center $d$ and foci $d\pm c$, truncation rank $R$ for $\mathcal{T}$, $F(\cdot)$ from \cref{equation_matrix_eq2}, left preconditioner $P \in \mathbb{R}^{M \times M}$, right hand side $\hat{B} \in T_R$ and start matrix $\hat{X} \in T_R$}
    \ENSURE{Approximate solution $\hat{X} \in T_R$}
        \STATE Find $\hat{R}_0$ such that $P \hat{R}_0=\mathcal{T}\big(\hat{B}-F(\hat{X})\big)$
\STATE $\hat{\Phi}_0:=\frac{1}{d}\hat{R}_0 $
\STATE $\hat{X}=\mathcal{T}(\hat{X}+\hat{\Phi}_0)$
\STATE $t_0:=1$
\STATE $t_1:=\frac{d}{c}$
\FOR{$i=1,...,l$}
\STATE $t_{i+1}:=2\frac{d}{c}t_i-t_{i-1} $
\STATE $\alpha_i:=\frac{2t_i}{ct_{i+1}}$
\STATE $\beta_i:=\frac{t_{i-1}}{t_{i+1}}$
\STATE Find $\hat{R}_i$ such that $P\hat{R}_i=\mathcal{T}\big(\hat{B}-F(\hat{X})\big)$.
\STATE $\hat{\Phi}_i:=\mathcal{T}(\alpha_i\hat{R}_i+\beta_i \hat{\Phi}_{i-1})$
\STATE $\hat{X}=\mathcal{T}(\hat{X}+\hat{\Phi}_i)$
\ENDFOR
  
\end{algorithmic}
\end{algorithm}can therefore be computed using the representation \cref{equation_set_eigenvalues1} of $\Lambda(\mathcal{P}_T^{-1} \mathcal{A})$ for a small number of degrees of freedom. Since we are using the mean-based preconditioner, the elements in $\Lambda(\mathcal{P}_T^{-1}\mathcal{A})$ lie symmetrically around $x=1$ in the complex plane $\big($compare (\ref{equation_chebyshevt_pre_bl1})$\big)$. Consider the ellipse with center \begin{align*}
d:=\frac{\Lambda_{\text{min}}+\Lambda_{\text{max}} }{2} \quad \text{and foci} \quad d\pm c \quad \text{for} \quad c:=\Lambda_{\text{max}}-d\text{.}
\end{align*}The imaginary parts of the the elements in $\Lambda(\mathcal{P}_T^{-1}\mathcal{A})$ are so small such that this ellipse encircles all eigenvalues of $\mathcal{P}^{-1}\mathcal{A}$. Moreover, it does not touch the imaginary axis since $\Lambda_{\text{min}}>0$. The Chebyshev method from \cite{paper_chebyshev_manteuffel} can therefore be generalized in the same manner as the GMRES method in \cref{section_gmrest1} and used to find a low-rank approximation $\hat{X}$ of $X$ in \cref{equation_matrix_eq2}.
 The resulting truncated Chebyshev variant ChebyshevT is coded in \cref{algorithm_chebyshevt_truncated1}.
\section{Time Discretization}
\label{chap5_time1}
\subsection{The Linear Fluid-structure Interaction Problem}
Let $[0,T]$ be a time interval for $T \in \mathbb{R}^+$ and $t \in [0,T]$ be the time variable. The deformation $u$ and the velocity $v$ now depend, in addition, on the time variable $t$ so we write $u(t,x)$ and $v(t,x)$. With the solid density $\rho_s \in \mathbb{R}$, the non stationary Navier-Lam\'{e} equations discussed in Section 2.3.1.2 of \cite{richter_fsi1} fulfill
\begin{align*}
 \rho_s \partial_{tt}u-\operatorname{div}(\sigma)= \rho_s \partial_tv-\operatorname{div}(\sigma)=0 \text{,} \quad \partial_t u =v\text{.}
\end{align*}
The time term $\rho_f \partial_t v$ coming from the Stokes fluid equations as mentioned in (2.42) of \cite{richter_fsi1} is added to the left side of the momentum equation. The weak formulation of the non-stationary coupled linear fluid-structure interaction problem is given by
\begin{equation}\label{problem_nonstat_linear_fsi1}
\begin{aligned}
\langle \nabla \cdot v, \xi \rangle_F &=0 \text{,}\\
\rho_f \langle \partial_t v, \varphi  \rangle_F+  \rho_s \langle \partial_t v, \varphi \rangle_S+\mu_s \langle \nabla u + \nabla u^T, \nabla \varphi \rangle_S +\lambda_s \langle \operatorname{tr}(\nabla u)I,\nabla \varphi \rangle_S
\\+ \nu_f \rho_f \langle \nabla v+\nabla v^T, \nabla \varphi \rangle_F-\langle p, \nabla \cdot \varphi \rangle_F &=0 \text{,}\\
\langle \nabla u, \nabla \psi \rangle_F&=0 
\end{aligned}
\end{equation}
with regularity conditions $v \in L^2\big([0,T];v_{\text{in}}+H_0^1(\Omega,\Gamma_f^D \cup \Gamma_{\text{int}})^d\big)$, $\partial_t v \in$ \linebreak $L^2\big([0,T];H^{-1}(\Omega)^d\big)$ for all $(t,x) \in [0,T] \times \Omega$. We use the notation from \cref{problem_linear_fsi1}.
\subsection{Time Discretization With the \texorpdfstring{$\boldsymbol{\theta}$}{Theta}-Scheme}\label{sec_time1}
Let $A_t^f, A_t^s \in \mathbb{R}^{M \times M}$ be discretization matrices:
\begin{align*}
A_t^{f} \quad &\text{ discretizes } \quad \langle v, \varphi \rangle_F \qquad \text{and} \qquad
A_t^s \quad \text{ discretizes } \quad  \rho_s \langle v, \varphi \rangle_S \text{.}
\end{align*}
Now consider a discretization that splits the time interval $[0,T]$ into $s+1 \in \mathbb{N}$ equidistant time steps. Let the distance between two time steps be $\Delta_t$. The starting time is $t_0=0$ and the following times are thus given by $t_i:=i \Delta_t$ for $i \in \{1,...,s\}$. Let $X^i$ be the approximate solution at time $t_i$, $X^0$ is given as the initial value. The given Dirichlet data $b_D^i$ at time $t_i$ for all $i \in \{0,...,s\}$ yield the time dependent right hand side
\begin{align*}
B^i:=b_D^i \otimes (1,...,1) \quad \text{for} \quad i \in \{0,...,s\} \text{.}
\end{align*}
Consider the one-step $\theta$-scheme explained in Section 4.1 of \cite{richter_fsi1}. Using the notation from \cref{equation_matrix_eq2}, at time $t_i$, the following equation is to be solved for $X^i$:
\small
\begin{equation}\label{equation_nonstat_timei}
\begin{aligned} 
\underbrace{\frac{1}{\Delta_t} A_t^f X^i (\rho_fI_m+D_3)+\frac{1}{\Delta_t} A_t^s X^i +\theta F(X^i)}_{=:F^i(X^i)} \hspace{5.7cm} \\ =\underbrace{\frac{1}{\Delta_t}A_t^f X^{i-1}(\rho_fI_m+D_3) + \frac{1}{\Delta_t}A_t^s X^{i-1} -(1-\theta) F(X^{i-1})+\theta B^i+(1-\theta)B^{i-1}}_{=:B^i(X^{i-1})} \text{,}
\end{aligned}
\end{equation}
\normalsize
where $\theta \in [0,1]$. $F^i(\cdot)$ contains only two sum terms more than $F(\cdot)$ from \cref{equation_matrix_eq2}. At time $t_i$, both \cref{algorithm_gmres_truncated1} and \cref{algorithm_chebyshevt_truncated1} can be applied to the quasi stationary problem \cref{equation_nonstat_timei} with $F^i(\cdot)$ instead of $F(\cdot)$ and the right hand side $B^i(X^{i-1})$.
\subsection{Preconditioner}
At all time steps, the full matrix is given by
\normalsize
\begin{align*}
\mathcal{A}^t:= \frac{1}{\Delta_t}  &(\rho_f I_m+D_3) \otimes A_t^f  + \frac{1}{\Delta_t} I_m \otimes A_t^s\\&+\theta \big( I\otimes A_0+D_1\otimes A_1+D_2\otimes A_2+\nu_fD_3\otimes A_3 \big) \text{.}
\end{align*}
\normalsize
The mean-based preconditioner, similar to $\mathcal{P}_T$ from \cref{subchapter_preconditioner1}, is
\normalsize
\begin{align*}
\mathcal{P}_T^t&:=I \otimes P_T^t \text{,}\quad \text{where}\\ P_T^t&:=\frac{1}{\Delta_t} ( \rho_f+\bar{\rho}_f)A_t^f+\frac{1}{\Delta_t} A_t^s+\theta\big(A_0+\bar{\mu}_sA_1+\bar{\lambda}_sA_2+\nu_f\bar{\rho}_fA_3 \big) \text{.}
\end{align*}
\normalsize
Even though the right hand side $B^i(X^{i-1})$ changes with every time step, the system matrix does not. 
\section{Theoretical Error Bounds}\label{chapter_error1}The convergence proofs of the GMRES method from Theorem 35.2 of \cite{trefethen_numlinalg1} and Section 3.4 of \cite{paper_gmres_saad_schultz} base on the fact that the residual of the $l$th GMRES iterate can be represented as a product of a polynomial in $\mathcal{A}$ and the initial residual since the $l$th GMRES iterate is a linear combination of the start vector $x_0$ and the generating elements of $\mathcal{K}_l$. Also, the error bound of the Chebyshev method in \cite{paper_chebyshev_calvetti} relies on the fact that the residual of the $l$th Chebyshev iterate is such a product. But even if one considers \cref{algorithm_gmres_truncated1} and \cref{algorithm_chebyshevt_truncated1} in a non preconditioned version, multiplication with the system matrix $\mathcal{A}$ is always disturbed due to the error induced by the truncation operator. The GMREST method minimizes over $\mathcal{K}_l^{\mathcal{T}}$, the truncated Krylov subspace, instead of $\mathcal{K}_l$. In \cref{sec_gmrest_error1}, the basis elements of $\mathcal{K}_l^{\mathcal{T}}$ are represented explicitly taking the truncation accuracy into consideration. Let  $x_l$ be the $l$th GMRES iterate, $\hat{x}_l$ be the $l$th GMREST iterate. An upper bound of 
 \begin{align*}
 \|x_l-\hat{x}_l\|_2
\end{align*}
is derived from the accuracy of the basis elements of $\mathcal{K}_l^\mathcal{T}$. In relation to Krylov subspace methods, inaccuracies induced by matrix-vector multiplication result in so called inexact Krylov methods and have been discussed in \cite{paper_simoncini_inexact_krylov1}. Iterative processes that involve truncation have been discussed in a general way in \cite{paper_hackbusch_khorom_approximate_iterations}. 
For the $l$th Chebyshev iterate $x_l$ and the $l$th ChebyshevT iterate $\hat{x}_l$,  the error
\begin{align*}
\|x_l-\hat{x}_l\|_2
\end{align*}
is bounded in the same way in \cref{sec_cheb_error1}. These bounds show how the truncation error is propagated iteratively in \cref{algorithm_gmres_truncated1} and \cref{algorithm_chebyshevt_truncated1} if the machine precision error is neglected.
 \begin{remark}If $v \in \mathbb{R}^{Mm}$,   $\mathcal{T}(v)$ addresses $\mathcal{T}(\operatorname{vec}^{-1}(v))$. Thus for the ease of notation, the truncation operator $\mathcal{T}$ from \cref{definition_truncation_operator1} is regarded as a map
 \begin{align*}
 \mathcal{T}: \mathcal{R}^{Mm} \rightarrow T_R
 \end{align*}
 and for $v \in \mathbb{R}^{Mm}$, $\mathcal{T}(v)$ addresses the full representation of the tensor in vector notation, a vector in $ \mathbb{R}^{Mm}$.
 \end{remark}
 \begin{definition}[Truncation accuracy] The truncation operator $\mathcal{T}$ from \cref{definition_truncation_operator1} 
 is said to have accuracy $\epsilon >0$ if for any $x \in \mathbb{R}^{Mm}$
 \begin{align*}
\hat{x}:=\mathcal{T}(x)=x+\mathcal{E}_{\hat{x}} \quad \text{with} \quad \mathcal{E}_{\hat{x}} \in \mathbb{R}^{Mm} \quad \text{and} \quad
 \|\mathcal{E}_{\hat{x}}\|_2 \leq \epsilon
 \end{align*}
 holds. $\mathcal{E}_{\hat{x}}$ is the error induced by $\mathcal{T}$ when $x$ is truncated.
  \end{definition}
\subsection{Matrix-Vector Product Evaluation Accuracy}
\label{subsection_matrix_evaluation1}
If a tensor is multiplied with a scalar or a matrix, there is no truncation needed since the tensor rank does not grow. But the evaluation of $F(\cdot)$ from \cref{equation_matrix_eq2} involves 4 sum terms. After an evaluation of $F(\cdot)$ with a tensor as argument, the result has to be truncated. To keep complexity low for $\hat{X}\in T_R$, the sum $\mathcal{T}\big(F(\hat{X})\big)$, in practice, is truncated consecutively
\begin{align*}
\mathcal{T}\big(F(\hat{X})\big)&\equiv\mathcal{T}\Big(\mathcal{T}\big( \mathcal{T}(A_0\hat{X}+A_1\hat{X}D_1 )+A_2\hat{X}D_2\big)+\nu_fA_3\hat{X}D_3\Big)\\
&=\mathcal{T}\big( \mathcal{T}( A_0\hat{X}+A_1\hat{X}D_1+A_2\hat{X}D_2+\mathcal{E}_{\hat{F}_{s_1}} )+\nu_fA_3\hat{X}D_3\big)\\
&=\mathcal{T}\big( A_0\hat{X}+A_1\hat{X}D_1+A_2\hat{X}D_2+\nu_fA_3\hat{X}D_3+\mathcal{E}_{\hat{F}_{s_1}}+\mathcal{E}_{\hat{F}_{s_2}}\big)\\
&=F(\hat{X})+\mathcal{E}_{\hat{F}_{s_1}}+\mathcal{E}_{\hat{F}_{s_2}}+\mathcal{E}_{\hat{F}_{s_3}} \text{.}
\end{align*}
$\mathcal{E}_{\hat{F}_{s_i}}$ denotes the truncation error induced by the truncation of the $i$th sum term for $i \in \{1,2,3\}$. By \cref{definition_truncation_operator1}, $\|\mathcal{E}_{\hat{F}_{s_i}}\|_2 \leq \epsilon$  for all $i \in \{1,2,3\}$.
In $\mathcal{T}(F(\cdot))$ are, if the number of summands in $F(\cdot)$ is $K \in \mathbb{N}$, a total of $K-1$ truncations hidden. For a truncation accuracy of $\epsilon>0$ we have
 \begin{align*}
\| \mathcal{T}(F(\hat{X}))-F(\hat{X}) \|_2 \leq (K-1)\epsilon \text{.}
 \end{align*}
Since $K$ is a small number, usually not bigger than $4$, we will neglect this detail and simply assume
\begin{align*}
\|\mathcal{T}(F(\hat{X}))-F(\hat{X})\|_2 \leq \epsilon
\end{align*}
in the following. To make sure that the stated error bounds are still valid, the truncation accuracy would be asked to, to be exact, less than $\frac{\epsilon}{K-1}$.
\subsection{GMREST Error Bounds} \label{sec_gmrest_error1}
Let $x_l$ be the $l$th standard GMRES iterate, $\hat{x}_l$ be the $l$th GMREST iterate. How big is the difference between the truncated Krylov subspace $\mathcal{K}_l^{\mathcal{T}}$ from \cref{section_gmrest1} and the Krylov subspace $\mathcal{K}_l$? First we derive explicit representations of the non-normalized basis elements of $\mathcal{K}_l^{\mathcal{T}}$. For the following Lemma, we need the truncation errors $\mathcal{E}_{K^{\mathcal{T}_k}} \in \mathbb{R}^{Mm}$ for $k \in \mathbb{N}$. They are induced by the truncation operator when the $k$th basis element of the truncated Krylov subspace $\mathcal{K}_l^{\mathcal{T}}$ is computed. For a truncation operator with accuracy $\epsilon > 0$, it holds $\|\mathcal{E}_{\mathcal{K}^{\mathcal{T}_k}}\|_2 \leq \epsilon$ for all $k \in \mathbb{N}$.
\begin{lemma}[Basis Representation of $\mathcal{K}_l^{\mathcal{T}}$]\label{lemma_basis_repr_gmrest1}Assume $\operatorname{dim}(\mathcal{K}_l^{\mathcal{T}})=l$ and
\begin{align*}
\hat{r}_0=\mathcal{T}\big(\mathcal{P}^{-1}(b-\mathcal{A}x_0)\big) =r_0+\mathcal{E}_{\hat{r}_0} \text{.}
\end{align*} Let the truncation operator $\mathcal{T}(\cdot)$ have accuracy $\epsilon>0$. The non-normalized basis elements of $\mathcal{K}_l^{\mathcal{T}}$ are given by
\begin{align*}
\hat{r}_0  \quad &\text{and}  \\ K^{\mathcal{T}_k}&:=(\mathcal{P}^{-1}\mathcal{A})^kr_0+(\mathcal{P}^{-1}\mathcal{A})^k\mathcal{E}_{\hat{r}_0}+\sum \limits_{j=1}^k (\mathcal{P}^{-1} \mathcal{A})^{j-1} \mathcal{E}_{K^{\mathcal{T}_{k-j+1}}} \\ &\hspace{.6cm} \text{for all}\quad k \in \{1,...,l-1\} \text{.}
\end{align*}
\end{lemma}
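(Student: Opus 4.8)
The plan is to prove the formula by induction on $k$, exploiting the recursive structure of the basis elements of $\mathcal{K}_l^{\mathcal{T}}$. First I would fix the recursion. Setting $K^{\mathcal{T}_0}:=\hat{r}_0$, the defining property of the truncated Krylov subspace is that the $k$th non-normalized basis element arises from the $(k-1)$th by one application of the map $\mathcal{T}(P^{-1}F(\cdot))$. Translating from the matrix notation to the vector notation as in \cref{subsection_matrix_evaluation1} (recall $\mathcal{P}=I_m\otimes P$, so that $P^{-1}F$ acting on a matrix corresponds to $\mathcal{P}^{-1}\mathcal{A}$ acting on its vectorization), this reads $K^{\mathcal{T}_k}=\mathcal{T}\big(\mathcal{P}^{-1}\mathcal{A}\,K^{\mathcal{T}_{k-1}}\big)$. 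By the truncation accuracy together with the aggregated matrix-vector evaluation bound, each such step introduces a single additive error, i.e. $K^{\mathcal{T}_k}=\mathcal{P}^{-1}\mathcal{A}\,K^{\mathcal{T}_{k-1}}+\mathcal{E}_{K^{\mathcal{T}_k}}$ with $\|\mathcal{E}_{K^{\mathcal{T}_k}}\|_2\le\epsilon$. This recursion is the only structural fact needed.

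For the base case $k=1$ I would substitute $\hat{r}_0=r_0+\mathcal{E}_{\hat{r}_0}$ into $K^{\mathcal{T}_1}=\mathcal{P}^{-1}\mathcal{A}\,\hat{r}_0+\mathcal{E}_{K^{\mathcal{T}_1}}$, which yields $(\mathcal{P}^{-1}\mathcal{A})r_0+(\mathcal{P}^{-1}\mathcal{A})\mathcal{E}_{\hat{r}_0}+\mathcal{E}_{K^{\mathcal{T}_1}}$. This matches the claimed expression, since for $k=1$ the sum reduces to its single $j=1$ term $(\mathcal{P}^{-1}\mathcal{A})^0\mathcal{E}_{K^{\mathcal{T}_1}}=\mathcal{E}_{K^{\mathcal{T}_1}}$.

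For the inductive step, assuming the formula for $k-1$, I would left-multiply it by $\mathcal{P}^{-1}\mathcal{A}$ and add $\mathcal{E}_{K^{\mathcal{T}_k}}$, using linearity of $\mathcal{P}^{-1}\mathcal{A}$. The leading terms become $(\mathcal{P}^{-1}\mathcal{A})^{k}r_0$ and $(\mathcal{P}^{-1}\mathcal{A})^{k}\mathcal{E}_{\hat{r}_0}$, exactly the first two terms of the target. The inherited sum $\sum_{j=1}^{k-1}(\mathcal{P}^{-1}\mathcal{A})^{j-1}\mathcal{E}_{K^{\mathcal{T}_{k-j}}}$ picks up one extra factor of $\mathcal{P}^{-1}\mathcal{A}$, and the newly added $\mathcal{E}_{K^{\mathcal{T}_k}}$ supplies the missing lowest-order term. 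The one point that needs care is the index bookkeeping: after the shift $j\mapsto j+1$ the inherited sum becomes $\sum_{j=2}^{k}(\mathcal{P}^{-1}\mathcal{A})^{j-1}\mathcal{E}_{K^{\mathcal{T}_{k-j+1}}}$, and adjoining the $j=1$ contribution $(\mathcal{P}^{-1}\mathcal{A})^0\mathcal{E}_{K^{\mathcal{T}_k}}$ closes it up to precisely $\sum_{j=1}^{k}(\mathcal{P}^{-1}\mathcal{A})^{j-1}\mathcal{E}_{K^{\mathcal{T}_{k-j+1}}}$, completing the induction.

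I expect the only genuine obstacle to be conceptual rather than computational: justifying that one full application of the truncated operator $\mathcal{T}(P^{-1}F(\cdot))$ can be modelled by a single perturbation $\mathcal{E}_{K^{\mathcal{T}_k}}$ of norm at most $\epsilon$, even though $F(\cdot)$ is a sum of four terms whose consecutive truncation produces several error contributions. This is exactly what \cref{subsection_matrix_evaluation1} licenses, by absorbing the factor $K-1$ into the accuracy. Once that identification is granted, the remainder is the routine induction sketched above; the bounds $\|\mathcal{E}_{K^{\mathcal{T}_k}}\|_2\le\epsilon$ and $\|\mathcal{E}_{\hat{r}_0}\|_2\le\epsilon$ play no role in the identity itself but will serve as the input for the subsequent estimate of $\|x_l-\hat{x}_l\|_2$.
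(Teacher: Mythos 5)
Your proposal is correct and follows essentially the same route as the paper's own proof: an induction on $k$ using the recursion $K^{\mathcal{T}_k}=\mathcal{T}\big(\mathcal{P}^{-1}\mathcal{A}\,K^{\mathcal{T}_{k-1}}\big)=\mathcal{P}^{-1}\mathcal{A}\,K^{\mathcal{T}_{k-1}}+\mathcal{E}_{K^{\mathcal{T}_k}}$, followed by the index shift that closes the sum. Your explicit appeal to \cref{subsection_matrix_evaluation1} to justify modelling one application of $\mathcal{T}(P^{-1}F(\cdot))$ by a single $\epsilon$-perturbation is exactly the convention the paper adopts there, so nothing is missing.
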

\begin{proof} (by induction)\\
For $k=1$
\begin{align*}
K^{\mathcal{T}_1}&=\mathcal{T}\big(\mathcal{P}^{-1} F(\hat{r}_0)\big)=\mathcal{T}(\mathcal{P}^{-1}\mathcal{A}\hat{r}_0)=\mathcal{T}\big(\mathcal{P}^{-1}\mathcal{A} (r_0+\mathcal{E}_{\hat{r}_0}) \big)\\
&=\mathcal{P}^{-1}\mathcal{A}r_0+\mathcal{P}^{-1}\mathcal{A}\mathcal{E}_{\hat{r}_0}+\mathcal{E}_{K^{\mathcal{T}_1}}
\end{align*} and ``$k-1\Rightarrow k$'' since
\begin{align*}
K^{\mathcal{T}_k}&=\mathcal{T}(\mathcal{P}^{-1}F)^{k}(\hat{r}_0)=\mathcal{T}\big(\mathcal{P}^{-1} F(K^{\mathcal{T}_{k-1}})\big)=\mathcal{T} (\mathcal{P}^{-1}\mathcal{A} K^{\mathcal{T}_{k-1}})\\
&=\mathcal{T} \Big(\mathcal{P}^{-1} \mathcal{A} \big( (\mathcal{P}^{-1}\mathcal{A})^{k-1}r_0+ (\mathcal{P}^{-1}\mathcal{A})^{k-1}\mathcal{E}_{\hat{r}_0}    + \sum \limits_{j=1}^{k-1}  (\mathcal{P}^{-1}\mathcal{A})^{j-1} \mathcal{E}_{K^{\mathcal{T}_{k-j}}} \big) \Big)\\
&=(\mathcal{P}^{-1}\mathcal{A})^kr_0+(\mathcal{P}^{-1}\mathcal{A})^k \mathcal{E}_{\hat{r}_0}+\sum \limits_{j=1}^{k-1} (\mathcal{P}^{-1}\mathcal{A})^{j}\mathcal{E}_{K^{\mathcal{T}_{k-j}}}+\mathcal{E}_{K^{\mathcal{T}_k}}\\
&=(\mathcal{P}^{-1}\mathcal{A})^kr_0+(\mathcal{P}^{-1}\mathcal{A})^k \mathcal{E}_{\hat{r}_0}+\sum \limits_{j=1}^k (\mathcal{P}^{-1}\mathcal{A})^{j-1}\mathcal{E}_{K^{\mathcal{T}_{k-j+1}}} \text{.}
\end{align*}
\end{proof}
\begin{remark}[(Truncation Error of $\hat{r}_0$)] \label{remark_truncation_solve1}
Consider the line
\begin{align*}
\textit{Find } \hat{R} \textit{ such that }P\hat{R}=\mathcal{T}\big(\hat{B}-F(\hat{X})\big)
\end{align*}
of \cref{algorithm_gmres_truncated1}.
In vector notation,
\begin{align}\label{equation_linear_r0_system1}
\mathcal{P}\hat{r}_0=\mathcal{T}\big(b-F(\hat{x}_0)\big)
\end{align}
is solved for $\hat{r}_0$. Usually the initial vector $x_0$ is chosen such that it can be represented by a tensor of low rank. So we assume $\hat{x}_0=\mathcal{T}(x_0)=x_0$. If the linear system \cref{equation_linear_r0_system1} is solved before truncation we have
\begin{align*}
\|\mathcal{E}_{\hat{r}_0}\|_2 \leq \epsilon \text{.}
\end{align*}
But this is rarely implemented this way. In practice, the right-hand side of \cref{equation_linear_r0_system1} is truncated before the linear system is solved for $\hat{r}_0$. In this case,
\begin{align*}
\|\mathcal{E}_{\hat{r}_0}\|_2 \leq \epsilon \|\mathcal{P}^{-1}\|_2
\end{align*}
holds. The following statements refer to the latter, more practical case. The error bounds that result in the first case can be found in \cref{appendix_error1_gmrest}.
\end{remark}
\begin{remark}[(Truncation of $\hat{W}$ and Orthogonality)]\label{remark_truncation_hatw1}Consider the line
\begin{align*}
\hat{W}:=\mathcal{T}(\hat{W}-H_{k,i}\hat{V}_k)
\end{align*}
in \cref{algorithm_gmres_truncated1}. In the lemma above, this truncation is neglected. When the $k$th basis element $\hat{V}_{k}$ is set up, there are $k$ extra additions involved due to this line. Let $\mathcal{E}_{\hat{W}}$ be the truncation error that occurs when this line is executed. For the sake of readability, we neglect that they differ from loop iteration to loop iteration. As a consequence, we do not add another index to $\mathcal{E}_{\hat{W}}$. The basis elements are then given by
\begin{align*}
\hat{r}_0  \quad &\text{and}\\
K^{\mathcal{T}_k}&:=(\mathcal{P}^{-1}\mathcal{A})^kr_0+(\mathcal{P}^{-1}\mathcal{A})^k \mathcal{E}_{\hat{r}_0}+\sum \limits_{j=1}^k(\mathcal{P}^{-1}\mathcal{A})^{j-1}\mathcal{E}_{K^{\mathcal{T}_{k-j+1}}}+k\mathcal{E}_{\hat{W}} \\ & \hspace{.6cm}  \text{for} \quad k \in \{1,...,l-1\} \text{.}
\end{align*}
Furthermore, we neglect round-off errors incurred from finite precision arithmetic as these are assumed to be much smaller than the truncation errors. The reason why the basis elements of $\mathcal{K}_l$ and $\mathcal{K}_l^{\mathcal{T}}$ obtained from the Arnoldi iteration differ from each other is the truncation error. Even though the elements
\begin{align}\label{basis_gmres_krylov1}
\{r_0,\mathcal{P}^{-1}\mathcal{A},...,(\mathcal{P}^{-1}\mathcal{A})^{l-1}r_0\}
\end{align}
span $\mathcal{K}_l$, they  are not orthogonal. But for the sake of notation, we incorporate the error made at the orthogonalization of the basis elements, in the truncated case, into $\mathcal{E}_{\hat{W}}$ and address by \cref{basis_gmres_krylov1} the normalized basis elements that result from the Arnoldi iteration. In other words, we tacitly assume that the basis elements \cref{basis_gmres_krylov1} of $\mathcal{K}_l$ are orthonormal, write them in the representation \cref{basis_gmres_krylov1} and incorporate the error we made at orthogonalization into $\mathcal{E}_{\hat{W}}$.
This is just one result of the assumption that we use exact precision.
\end{remark}
\begin{lemma}[Error Bound for Truncated Basis Elements]\label{lemma_errors_basis_gmrest1}Let $\sigma_{\mathcal{P}}:=\|\mathcal{P}^{-1}\mathcal{A}\|_2$. Under the assumptions of \cref{lemma_basis_repr_gmrest1}, for
\begin{align*}
e_k:=\begin{cases} \|\hat{r}_0-r_0\|_2  &\text{ if } k=0 \\ \|K^{\mathcal{T}_k}-(\mathcal{P}^{-1}A)^k r_0\|_2 &\text{ if } k \in \{1,...,l-1\} \end{cases}  \text{,}
\end{align*}
it holds that
\begin{align*}
e_k \leq \epsilon \big( \sum \limits_{j=1}^k \sigma_\mathcal{P}^{j-1} + \|\mathcal{P}^{-1}\|_2 \sigma_\mathcal{P}^k+k \big) \qquad \text{for} \qquad k \in \{0,...,l-1\}  \text{.}
\end{align*}
\begin{proof} For $k=0$, we have
\begin{align*}
\|\hat{r}_0-r_0\|_2 &=\|\mathcal{E}_{\hat{r}_0}\|_2\\&\leq \epsilon \|\mathcal{P}^{-1}\|_2\\&=\epsilon \big( \sum \limits_{j=1}^k \sigma_{\mathcal{P}}^{j-1} + \|\mathcal{P}^{-1}\|_2 \sigma_\mathcal{P}^k  \big)\text{,}
\end{align*}
with the convention
\begin{align*}
\sum \limits_{j \in \emptyset}\sigma_\mathcal{P}^{j-1}=0 \text{.}
\end{align*}
For $k \geq 1$, we use \cref{lemma_basis_repr_gmrest1}.
\begin{align*}
e_k&=\| (\mathcal{P}^{-1}\mathcal{A})^k\mathcal{E}_{\hat{r}_0}+\sum \limits_{j=1}^k (\mathcal{P}^{-1}\mathcal{A})^{j-1} \mathcal{E}_{K^{\mathcal{T}_{k-j+1}}} \|_2\\
&=\| (\mathcal{P}^{-1} \mathcal{A})^k \mathcal{E}_{\hat{r}_0}+\mathcal{E}_{K^{\mathcal{T}_{k}}}+\sum \limits_{j=1}^{k-1} (\mathcal{P}^{-1}\mathcal{A})^j \mathcal{E}_{K^{\mathcal{T}_{k-j}}}  \|_2\\
& \leq \sigma_\mathcal{P}^k \epsilon \|\mathcal{P}^{-1}\|_2 +\epsilon+\sum \limits_{j=1}^{k-1} \sigma_\mathcal{P}^j \epsilon\\
&=\epsilon \big(\sum \limits_{j=1}^{k} \sigma_{\mathcal{P}}^{j-1}+ \|\mathcal{P}^{-1}\|_2 \sigma_\mathcal{P}^k \big)\text{.}
\end{align*}
The truncation error coming from the orthogonalization process mentioned in \cref{remark_truncation_hatw1} adds the term $\epsilon k$ to the error bound.
\end{proof}
\end{lemma}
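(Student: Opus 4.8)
The plan is to prove the bound by a single direct norm estimate applied to the explicit basis representation furnished by \cref{lemma_basis_repr_gmrest1}, in the augmented form that carries the extra orthogonalization term $k\mathcal{E}_{\hat{W}}$ from \cref{remark_truncation_hatw1}. The only machinery required is the triangle inequality, submultiplicativity of the spectral norm in the form $\|(\mathcal{P}^{-1}\mathcal{A})^j\|_2 \leq \sigma_{\mathcal{P}}^j$, and the three per-step truncation bounds $\|\mathcal{E}_{\hat{r}_0}\|_2 \leq \epsilon\|\mathcal{P}^{-1}\|_2$ (the practical case of \cref{remark_truncation_solve1}), $\|\mathcal{E}_{K^{\mathcal{T}_i}}\|_2 \leq \epsilon$, and $\|\mathcal{E}_{\hat{W}}\|_2 \leq \epsilon$. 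There is no deep idea here; the lemma is essentially a bookkeeping consequence of the recursion already unrolled in \cref{lemma_basis_repr_gmrest1}.

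First I would dispose of the base case $k=0$. Since $\hat{r}_0 - r_0 = \mathcal{E}_{\hat{r}_0}$ by the hypothesis of \cref{lemma_basis_repr_gmrest1}, the estimate $\|\hat{r}_0 - r_0\|_2 = \|\mathcal{E}_{\hat{r}_0}\|_2 \leq \epsilon\|\mathcal{P}^{-1}\|_2$ is immediate. I would then observe that, under the convention that the empty sum vanishes, the right-hand side of the claim at $k=0$ collapses to exactly $\epsilon\|\mathcal{P}^{-1}\|_2$, so the two sides agree and the case is closed.

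For $k \geq 1$ I would subtract the leading term $(\mathcal{P}^{-1}\mathcal{A})^k r_0$ from the representation of $K^{\mathcal{T}_k}$, leaving the error-propagation part $(\mathcal{P}^{-1}\mathcal{A})^k\mathcal{E}_{\hat{r}_0} + \sum_{j=1}^k (\mathcal{P}^{-1}\mathcal{A})^{j-1}\mathcal{E}_{K^{\mathcal{T}_{k-j+1}}} + k\mathcal{E}_{\hat{W}}$. Applying the triangle inequality and submultiplicativity term by term gives $e_k \leq \sigma_{\mathcal{P}}^k\|\mathcal{E}_{\hat{r}_0}\|_2 + \sum_{j=1}^k \sigma_{\mathcal{P}}^{j-1}\|\mathcal{E}_{K^{\mathcal{T}_{k-j+1}}}\|_2 + k\|\mathcal{E}_{\hat{W}}\|_2$. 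Substituting the three truncation bounds, the first term becomes $\epsilon\|\mathcal{P}^{-1}\|_2\sigma_{\mathcal{P}}^k$, the middle sum becomes $\epsilon\sum_{j=1}^k \sigma_{\mathcal{P}}^{j-1}$, and the last term becomes $\epsilon k$, which is precisely the claimed bound.

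The one place that calls for care — and which I expect to be the only real bookkeeping obstacle — is the index of the propagated-error sum: the summand $(\mathcal{P}^{-1}\mathcal{A})^{j-1}\mathcal{E}_{K^{\mathcal{T}_{k-j+1}}}$ pairs the operator power $j-1$ with the truncation error at \emph{stage} $k-j+1$. The cleanest route is to peel off the $j=1$ term (contributing the bare $\mathcal{E}_{K^{\mathcal{T}_k}}$ with factor $\sigma_{\mathcal{P}}^0 = 1$) and relabel the remaining indices; alternatively, since every error norm is uniformly bounded by $\epsilon$, the index shift leaves the estimate untouched. Either way the factors collapse into the geometric sum $\sum_{j=1}^k \sigma_{\mathcal{P}}^{j-1}$. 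Finally, treating $\|\mathcal{E}_{\hat{W}}\|_2 \leq \epsilon$ across the $k$ orthogonalization additions accounts for the additive $\epsilon k$ term, completing the proof.
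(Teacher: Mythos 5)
Your proposal is correct and follows essentially the same route as the paper: the base case is handled identically via $\|\mathcal{E}_{\hat{r}_0}\|_2 \leq \epsilon\|\mathcal{P}^{-1}\|_2$ and the empty-sum convention, and for $k\geq 1$ both arguments apply the triangle inequality and $\|(\mathcal{P}^{-1}\mathcal{A})^{j}\|_2\leq\sigma_{\mathcal{P}}^{j}$ to the explicit representation from \cref{lemma_basis_repr_gmrest1}, with the $\epsilon k$ term coming from the orthogonalization errors of \cref{remark_truncation_hatw1}. The only cosmetic difference is that you fold the $k\mathcal{E}_{\hat{W}}$ term into the representation from the start, whereas the paper appends it at the end; the estimates are the same.
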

The standard GMRES minimizes over the Krylov subspace $\mathcal{K}_l$. In terms of \cref{remark_truncation_hatw1}, the standard GMRES method finds coefficients $c_i \in \mathbb{R}$ for $i \in \{1,...,l\}$ such that
\begin{align*}
x_l&=x_0+c_1r_0+c_2\mathcal{P}^{-1}\mathcal{A}r_0+...+c_l(\mathcal{P}^{-1}\mathcal{A})^{l-1}r_0\text{.}
\end{align*}
In the same way we can write
\begin{align*}
\hat{x}_l=\hat{x}_0+d_1\hat{r}_0+d_2K^{\mathcal{T}_1}+...+d_lK^{\mathcal{T}_{l-1}}\text{,}
\end{align*}
where the coefficients $d_i$ for $i \in \{1,...,l\}$ refer to the coefficients found by the Arnoldi iteration in the GMREST method. This allows to state the following theorem.
\begin{theorem}[Approximation Error of GMREST]\label{theorem_gmrest_error_bound1}
 Let $x_l$ be the $l$th iterate of the standard GMRES method, $\hat{x}_l$ be the $l$th iterate of the GMREST method. It holds 

\begin{align*}
\|\hat{x}_l-x_l\|_2\leq \epsilon \sum \limits_{j=1}^l|d_j|\big( \sum \limits_{i=1}^{j-1} \sigma_\mathcal{P}^{i-1}+\|\mathcal{P}^{-1}\|_2\sigma_\mathcal{P}^{j-1}+j-1 \big) + \sum \limits_{j=1}^l |c_j-d_j|  + \epsilon l\text{.}
\end{align*}
\begin{proof}
\begin{align*}
\| \hat{x}_l-x_l \|_2&=\|\hat{x}_0-x_0+d_1\hat{r}_0-c_1r_0+d_2K^{\mathcal{T}_1}-c_2 \mathcal{P}^{-1} \mathcal{A}r_0+...
\\
&\hspace*{.4cm}+d_lK^{\mathcal{T}_{l-1}}-c_l(\mathcal{P}^{-1}\mathcal{A})^{l-1}r_0 \|_2\\
&\leq |d_1|e_0+|d_2|e_1+...+|d_l|e_{l-1}+|c_1-d_1|\|\underbrace{r_0}_{\makebox[0pt]{\small$(\star)$}}\|_2\\
&\hspace*{.4cm}+|c_2-d_2| \| \underbrace{\mathcal{P}^{-1}\mathcal{A}r_0}_{(\star)}\|_2+...+|c_l-d_l|\|(\underbrace{\mathcal{P}^{-1}\mathcal{A})^{l-1}r_0}_{(\star)} \|_2=(\ast)
\end{align*}
We assume that the standard GMRES method does an accurate orthogonalization of the Krylov subspace $\mathcal{K}_l$ (see  \cref{remark_truncation_hatw1}). By the elements $(\star)$ we address the orthonormal basis elements of $\mathcal{K}_l$. They all have an Euclidean norm of $1$. Therefore,
\begin{align*}
(\ast)&= \sum \limits_{j=1}^l\big(|d_j|e_{j-1} + |c_j-d_j| \big)\\
&\leq \epsilon \sum \limits_{j=1}^l |d_j| \big(\sum \limits_{i=1}^{j-1} \sigma_\mathcal{P}^{i-1}+\|\mathcal{P}^{-1}\|_2 \sigma_\mathcal{P}^{j-1}+j-1 \big)  + \sum \limits_{j=1}^l |c_j-d_j|
\end{align*}
holds. The additional sum term $\epsilon l$ comes from the last successive sum in the method where the approximation is built.
\end{proof}
\end{theorem}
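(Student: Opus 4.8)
The plan is to start from the two expansions of $x_l$ and $\hat{x}_l$ in the bases of $\mathcal{K}_l$ and $\mathcal{K}_l^{\mathcal{T}}$ stated just before the theorem, subtract them, and control the resulting difference term by term. Assuming the two methods share the same initial guess so that $\hat{x}_0 = x_0$, the difference $\hat{x}_l - x_l$ is a sum over $j \in \{1,\dots,l\}$ of paired terms $d_j K^{\mathcal{T}_{j-1}} - c_j (\mathcal{P}^{-1}\mathcal{A})^{j-1} r_0$, adopting the convention $K^{\mathcal{T}_0} := \hat{r}_0$ and $(\mathcal{P}^{-1}\mathcal{A})^0 r_0 = r_0$. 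Since both expansions carry the same number of terms, the whole estimate reduces to bounding each of these paired differences separately.

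The key algebraic step is a single add-and-subtract that separates two distinct error sources. For each $j$ I would insert $\pm\, d_j (\mathcal{P}^{-1}\mathcal{A})^{j-1} r_0$ to write
\[
d_j K^{\mathcal{T}_{j-1}} - c_j (\mathcal{P}^{-1}\mathcal{A})^{j-1} r_0 = d_j\big(K^{\mathcal{T}_{j-1}} - (\mathcal{P}^{-1}\mathcal{A})^{j-1} r_0\big) + (d_j - c_j)(\mathcal{P}^{-1}\mathcal{A})^{j-1} r_0 .
\]
The first summand isolates the error accumulated in the basis elements, whose norm is exactly $e_{j-1}$ and is controlled by \cref{lemma_errors_basis_gmrest1}; the second isolates the mismatch $|c_j - d_j|$ between the coefficients the two Arnoldi processes produce. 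Applying the triangle inequality yields
\[
\|\hat{x}_l - x_l\|_2 \le \sum_{j=1}^l |d_j|\, e_{j-1} + \sum_{j=1}^l |c_j - d_j|\, \|(\mathcal{P}^{-1}\mathcal{A})^{j-1} r_0\|_2 .
\]
Invoking the orthonormality convention of \cref{remark_truncation_hatw1}, the factors $\|(\mathcal{P}^{-1}\mathcal{A})^{j-1} r_0\|_2$ all equal $1$; substituting the explicit bound for $e_{j-1}$ from \cref{lemma_errors_basis_gmrest1} (with $k=j-1$) reproduces exactly the first two sums of the claim, and the closing $\epsilon l$ is appended to account for the single truncation in the final update line $\hat{X}=\mathcal{T}\big(\hat{X}+\sum_j y_j \hat{V}_j\big)$ of \cref{algorithm_gmres_truncated1}, which the basis-element analysis does not see.

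The main obstacle here is conceptual rather than computational: the coefficient-mismatch sum $\sum_j |c_j - d_j|$ cannot be reduced further with the present machinery, because $d_j$ arises from a least-squares problem posed over the \emph{perturbed} subspace $\mathcal{K}_l^{\mathcal{T}}$ rather than over $\mathcal{K}_l$. The add-and-subtract split is precisely what makes the argument succeed, by quarantining this genuinely uncontrollable contribution into its own term while routing all truncation-induced effects through $e_{j-1}$. A secondary point demanding care is the index bookkeeping — the $j$th term of each expansion involves the $(j-1)$st basis element and hence $e_{j-1}$ — together with the standing simplifications that $\hat{x}_0 = x_0$ and that round-off is negligible relative to truncation, under which the normalization of the $(\star)$ elements is taken to be exact.
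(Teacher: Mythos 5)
Your proof is correct and follows essentially the same route as the paper: the same term-by-term pairing of the two Krylov expansions (your explicit add-and-subtract is exactly the splitting the paper performs implicitly in its first inequality), the same appeal to \cref{lemma_errors_basis_gmrest1} for $e_{j-1}$, the same orthonormality convention for the untruncated basis elements, and the same trailing $\epsilon l$. The only quibble is your description of that last term as accounting for a \emph{single} truncation in the final update --- a single truncation would contribute only $\epsilon$; the paper gets $\epsilon l$ because the update $\hat{X}=\mathcal{T}\big(\hat{X}+\sum_{j=1}^{l} y_j \hat{V}_j\big)$ is in practice evaluated as a successive sum with one truncation per addition.
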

\subsection{ChebyshevT Error Bounds}\label{sec_cheb_error1}
Similar to \cref{sec_gmrest_error1}, we derive an error bound for the ChebyshevT method coded in \cref{algorithm_chebyshevt_truncated1}. Let $x_l$ denote the $l$th iterate of the standard Chebyshev method and $\hat{x}_l$ denote the $l$th iterate of the ChebyshevT method.
\begin{remark}\label{remark_cheby_precond_eps1}The $i$th residual is given by the solution $r_i$ to
\begin{align*}
\mathcal{P}r_i=b-\mathcal{A}x_i\text{.}
\end{align*}
The truncation of $r_i$ yields
\begin{align*}
\hat{r}_i=\mathcal{T}(r_i)=r_i+\mathcal{E}_{\hat{r}_i} \text{,} \quad \text{with} \quad \|\mathcal{E}_{\hat{r}_i}\|_2 \leq \epsilon
\end{align*}
if the truncation operator $\mathcal{T}$ is assumed to have accuracy $\epsilon$. In analogy to \cref{remark_truncation_solve1}, the two cases $\|\mathcal{E}_{\hat{r}_i}\|_2\leq \epsilon$ and $\|\mathcal{E}_{\hat{r}_i}\|_2\leq \epsilon \|\mathcal{P}^{-1}\|_2$ have to be distinguished. In this section, we consider the latter case. The error bounds of \cref{theorem_chebyshevt_appr_error1} for the case $\|\mathcal{E}_{\hat{r}_i}\| \leq \epsilon$ can be found in \cref{appendix_error1_chebyshevt}.
\end{remark}
The start vector and the right hand side are assumed to be of low rank, namely 
\begin{align*}
\hat{x}_0=\mathcal{T}(x_0)=x_0 \quad \text{and} \quad \hat{b}=\mathcal{T}(b)=b\text{.}
\end{align*}
In the same way as in \cref{sec_gmrest_error1}, the norm $\|\hat{x}_l-x_l\|_2$  is to be estimated. $\mathcal{E}_{\hat{x}_l^a}$ denotes the total error 
\begin{align*}
\mathcal{E}_{\hat{x}_l^a}:=\hat{x}_l-x_l\text{,}
\end{align*}
not to be confused with $\mathcal{E}_{\hat{x}_l}$, the truncation error with norm $\epsilon$ that occurs when truncating $\hat{x}_l$.
The iterative Chebyshev method is a three term recursion. Thus, the Chebyshev iterates itself can be represented by a recursive formula.
\begin{lemma}[Representation of the ChebyshevT Iterates]\label{lemma_representation_chebyshevt_iterates}
Let the scalars 
\begin{align*}
\alpha_i, \beta_i \in \mathbb{R} \quad \text{for} \quad i \in \{1,...,l\} \quad \text{and} \quad \hat{\Phi}_i \in T_R \quad \text{for} \quad i \in \{0,...,l\}
\end{align*}
be given as defined in \cref{algorithm_chebyshevt_truncated1}. $\Phi_i$ denote the non truncated full matrices corresponding to $\hat{\Phi}_i$ if \cref{algorithm_chebyshevt_truncated1} is applied and any truncation is neglected. If 
\begin{align*}
\hat{r}_0=r_0+\mathcal{E}_{\hat{r}_0} \text{,}
\end{align*}
it holds that
\begin{align*}
\mathcal{E}_{\hat{x}_0^a}&=0 \text{,}\\
\hat{x}_1&=x_1+\underbrace{\frac{1}{d}\mathcal{E}_{\hat{r}_0}+\mathcal{E}_{\hat{x}_1}}_{\mathcal{E}_{\hat{x}_1^a}}\text{,}\\
\hat{x}_2&=x_2+\underbrace{\mathcal{E}_{\hat{\Phi}_1}+\mathcal{E}_{\hat{x}_1^a}+\mathcal{E}_{\hat{x}_2}+\alpha_1(\mathcal{E}_{\hat{r}_1}-\mathcal{P}^{-1}\mathcal{A}\mathcal{E}_{\hat{x}_1^a})+\frac{\beta_1}{d}\mathcal{E}_{\hat{r}_0}}_{\mathcal{E}_{\hat{x}_2^a}} \quad \text{and}\\
\hat{x}_l&=x_l+\mathcal{E}_{\hat{\Phi}_{l-1}}+\mathcal{E}_{\hat{x}_{l-1}^a}+\mathcal{E}_{\hat{x}_l}+\sum \limits_{j=1}^{l-1}\alpha_j (\prod \limits_{i=1}^{l-j-1} \beta_{i+j}) (\mathcal{E}_{\hat{r}_j}- \mathcal{P}^{-1}\mathcal{A}\mathcal{E}_{\hat{x}_j^a})
\\
&\hspace{.4cm}+ (\prod \limits_{j=1}^{l-1} \beta_{j}  )\frac{1}{d}\mathcal{E}_{\hat{r}_0}+\sum \limits_{j=1}^{l-2} (\prod \limits_{i=1}^{l-j-1} \beta_{i+j})\mathcal{E}_{\hat{\Phi}_j} \quad \text{for} \quad l\geq 3 \text{,}
\end{align*}
where $\mathcal{E}_{\hat{x}_j^a}:=\hat{x}_j-x_j$ for $j \in \{0,...,l\}$. We use the convention
\begin{align*}
\prod \limits_{j \in \emptyset} \beta_j=1 \text{.}
\end{align*}
If a truncation operator of accuracy $\epsilon>0$ is used, then certainly $\|\mathcal{E}_{\hat{x}_i}\|_2 \leq \epsilon$ but not necessarily $\|\mathcal{E}_{\hat{x}_i^a}\|_2 \leq \epsilon$  holds for $i \in \{0,...,l\}$. The error induced by the truncation operator that truncates $\hat{\Phi}_{i}$ is denoted by $\mathcal{E}_{\hat{\Phi}_{i}}$ for $i \in \{0,...,l\}$. 
\end{lemma}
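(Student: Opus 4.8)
The plan is to track how each truncation injects an error and how the three-term Chebyshev recursion propagates it, and then to collect the contributions into the stated semi-recursive formula. First I would write the two methods line by line. Neglecting all truncations, \cref{algorithm_chebyshevt_truncated1} produces $\Phi_0=\frac1d r_0$, $x_1=x_0+\Phi_0$ and, for $i\ge 1$, $\Phi_i=\alpha_i r_i+\beta_i\Phi_{i-1}$, $x_{i+1}=x_i+\Phi_i$, where $\mathcal{P}r_i=b-\mathcal{A}x_i$. With truncation the same lines read $\hat{\Phi}_0=\frac1d\hat{r}_0$, $\hat{x}_1=\hat{x}_0+\hat{\Phi}_0+\mathcal{E}_{\hat{x}_1}$ and, for $i\ge1$, $\hat{\Phi}_i=\alpha_i\hat{r}_i+\beta_i\hat{\Phi}_{i-1}+\mathcal{E}_{\hat{\Phi}_i}$, $\hat{x}_{i+1}=\hat{x}_i+\hat{\Phi}_i+\mathcal{E}_{\hat{x}_{i+1}}$, where $\mathcal{P}\hat{r}_i=\mathcal{T}(b-\mathcal{A}\hat{x}_i)$. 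Here $\mathcal{E}_{\hat{\Phi}_i}$ and $\mathcal{E}_{\hat{x}_{i+1}}$ are the truncation errors of the lines $\hat{\Phi}_i:=\mathcal{T}(\cdots)$ and $\hat{X}=\mathcal{T}(\cdots)$, each of norm at most $\epsilon$.

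The crucial preliminary is to relate $\hat{r}_i$ to $r_i$. Writing $\mathcal{T}(b-\mathcal{A}\hat{x}_i)=(b-\mathcal{A}\hat{x}_i)+\mathcal{P}\,\mathcal{E}_{\hat{r}_i}$, so that $\mathcal{E}_{\hat{r}_i}$ absorbs $\mathcal{P}^{-1}$ times the truncation error and $\|\mathcal{E}_{\hat{r}_i}\|_2\le\epsilon\|\mathcal{P}^{-1}\|_2$ in the case considered in this section (\cref{remark_cheby_precond_eps1}), and using $b-\mathcal{A}\hat{x}_i=\mathcal{P}r_i-\mathcal{A}\mathcal{E}_{\hat{x}_i^a}$, I get $\hat{r}_i=r_i+\mathcal{E}_{\hat{r}_i}-\mathcal{P}^{-1}\mathcal{A}\mathcal{E}_{\hat{x}_i^a}$. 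For $i=0$ this collapses to $\hat{r}_0=r_0+\mathcal{E}_{\hat{r}_0}$ because $\mathcal{E}_{\hat{x}_0^a}=\hat{x}_0-x_0=0$, which is exactly the hypothesis.

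Next I would subtract the non-truncated lines from the truncated ones. Setting $\mathcal{E}_{\hat{\Phi}_i^a}:=\hat{\Phi}_i-\Phi_i$ and substituting the residual identity yields the two coupled first-order recursions $\mathcal{E}_{\hat{x}_{i+1}^a}=\mathcal{E}_{\hat{x}_i^a}+\mathcal{E}_{\hat{\Phi}_i^a}+\mathcal{E}_{\hat{x}_{i+1}}$ and, for $i\ge1$, $\mathcal{E}_{\hat{\Phi}_i^a}=\alpha_i(\mathcal{E}_{\hat{r}_i}-\mathcal{P}^{-1}\mathcal{A}\mathcal{E}_{\hat{x}_i^a})+\beta_i\mathcal{E}_{\hat{\Phi}_{i-1}^a}+\mathcal{E}_{\hat{\Phi}_i}$, with the base value $\mathcal{E}_{\hat{\Phi}_0^a}=\frac1d\mathcal{E}_{\hat{r}_0}$. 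The entries for $l=0,1,2$ then follow by direct substitution, reproducing the listed base cases.

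For $l\ge3$ I would combine $\mathcal{E}_{\hat{x}_l^a}=\mathcal{E}_{\hat{x}_{l-1}^a}+\mathcal{E}_{\hat{\Phi}_{l-1}^a}+\mathcal{E}_{\hat{x}_l}$ with a full unrolling of $\mathcal{E}_{\hat{\Phi}_{l-1}^a}$ only. Since the $\hat{\Phi}$-recursion is first-order linear with variable coefficient $\beta_i$, the standard solution formula (obtained by an inner induction on the index) gives
\begin{align*}
\mathcal{E}_{\hat{\Phi}_{l-1}^a}=\Big(\prod_{k=1}^{l-1}\beta_k\Big)\mathcal{E}_{\hat{\Phi}_0^a}+\sum_{j=1}^{l-1}\Big(\prod_{k=j+1}^{l-1}\beta_k\Big)\Big[\alpha_j(\mathcal{E}_{\hat{r}_j}-\mathcal{P}^{-1}\mathcal{A}\mathcal{E}_{\hat{x}_j^a})+\mathcal{E}_{\hat{\Phi}_j}\Big].
\end{align*}
Rewriting $\prod_{k=j+1}^{l-1}\beta_k=\prod_{i=1}^{l-j-1}\beta_{i+j}$, inserting $\mathcal{E}_{\hat{\Phi}_0^a}=\frac1d\mathcal{E}_{\hat{r}_0}$, and splitting off the $j=l-1$ summand of $\sum\mathcal{E}_{\hat{\Phi}_j}$ (whose weight is the empty product, hence $1$) as the separate term $\mathcal{E}_{\hat{\Phi}_{l-1}}$ reproduces the claimed formula exactly. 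The main obstacle is the index bookkeeping in this unrolling: matching the two product conventions, honouring $\prod_{j\in\emptyset}\beta_j=1$ so that the $j=l-1$ term detaches cleanly, and carrying the residual error in its two-part form $\mathcal{E}_{\hat{r}_i}-\mathcal{P}^{-1}\mathcal{A}\mathcal{E}_{\hat{x}_i^a}$ throughout. I would also stress that the terms $\mathcal{E}_{\hat{x}_j^a}$ are intentionally left un-expanded, so the statement is a semi-recursive representation rather than a closed form in the atomic truncation errors.
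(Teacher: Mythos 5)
Your proposal is correct and follows essentially the same route as the paper: you derive the residual identity $\hat{r}_i=r_i+\mathcal{E}_{\hat{r}_i}-\mathcal{P}^{-1}\mathcal{A}\mathcal{E}_{\hat{x}_i^a}$, unroll the first-order $\hat{\Phi}$-recursion into exactly the paper's intermediate formula \cref{equation_chebyshev_proof_phi_i} (your closed form for $\mathcal{E}_{\hat{\Phi}_{l-1}^a}$ coincides with it term by term, including the detached $j=l-1$ summand), and substitute into $\hat{x}_l=\mathcal{T}(\hat{x}_{l-1}+\hat{\Phi}_{l-1})$. The only difference is presentational: the paper phrases the unrolling as a nested induction rather than as the standard solution of a linear recursion, which changes nothing of substance.
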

\begin{proof}$l=1$:\\
Provided that $\hat{x}_0=x_0=x_0+\mathcal{E}_{\hat{x}_0^a} \Rightarrow \mathcal{E}_{\hat{x}_0^a}=0$.
\begin{align*}
\hat{x}_1=\mathcal{T}(\hat{x}_0+\frac{1}{d}\hat{r}_0)=\mathcal{T}\big(x_0+\frac{1}{d}r_0+\frac{1}{d}\mathcal{E}_{\hat{r}_0}\big)=\underbrace{x_0+\frac{1}{d}r_0}_{=x_1}+\underbrace{\frac{1}{d}\mathcal{E}_{\hat{r}_0}+\mathcal{E}_{\hat{x}_1}}_{=\mathcal{E}_{\hat{x}_1^a}}
\end{align*}
$l=2$:
\begin{align*}
\hat{x}_2&=\mathcal{T}( \hat{x}_1+\hat{\Phi}_1 )=\mathcal{T}\big( \hat{x}_1+\mathcal{T}(\alpha_1\hat{r}_1+\beta_1\hat{\Phi}_0 ) \big)=\mathcal{T}\big(\hat{x}_1+\mathcal{T}( \alpha_1\hat{r}_1+\frac{\beta_1}{d}\hat{r}_0   )  \big)\\
&=\mathcal{T} \Big(x_1+\mathcal{E}_{\hat{x}_1^a}+\mathcal{T} \big(\alpha_1 (r_1+\mathcal{E}_{\hat{r}_1}-\mathcal{P}^{-1}\mathcal{A}\mathcal{E}_{\hat{x}_1^a}) +\frac{\beta_1}{d} (r_0+\mathcal{E}_{\hat{r}_0}) \big) \Big)=(\star)
\end{align*}
since
\begin{align*}
\hat{r}_1=\mathcal{T}\big(\mathcal{P}^{-1}(b-\mathcal{A}\hat{x}_1)\big)=\mathcal{T}\big(\mathcal{P}^{-1}(b-\mathcal{A}x_1-\mathcal{A}\mathcal{E}_{\hat{x}_1^a})\big)=r_1-\mathcal{P}^{-1}\mathcal{A}\mathcal{E}_{\hat{x}_1^a}+\mathcal{E}_{\hat{r}_1} \text{.}
\end{align*}
Thus,
\begin{align*}
(\star)&=\mathcal{T}\big( \underbrace{x_1+\alpha_1r_1+\frac{\beta_1}{d}r_0}_{=x_2} + \alpha_1(\mathcal{E}_{\hat{r}_1} -\mathcal{P}^{-1}\mathcal{A}\mathcal{E}_{\hat{x}_1^a})+\frac{\beta_1}{d}\mathcal{E}_{\hat{r}_0}+\mathcal{E}_{\hat{x}_1^a}+\mathcal{E}_{\hat{\Phi}_1} \big)\\
&=x_2+\underbrace{\alpha_1(\mathcal{E}_{\hat{r}_1}-\mathcal{P}^{-1}\mathcal{A}\mathcal{E}_{\hat{x}_1^a})+\frac{\beta_1}{d}\mathcal{E}_{\hat{r}_0}+\mathcal{E}_{\hat{x}_1^a}+\mathcal{E}_{\hat{\Phi}_1}+\mathcal{E}_{\hat{x}_2}}_{=\mathcal{E}_{\hat{x}_2^a}}\text{.}
\end{align*}
For the proof for $l \geq 3$, we go by induction. For the initial step $l=3$, we need
\begin{align*}
\hat{\Phi}_0&=\frac{1}{d}\hat{r}_0=\frac{1}{d}(r_0+\mathcal{E}_{\hat{r}_0})=\Phi_0+\frac{1}{d}\mathcal{E}_{\hat{r}_0} \text{,} \nonumber \\
\hat{\Phi}_1&=\mathcal{T}(\alpha_1\hat{r}
_1+\beta_1 \hat{\Phi}_0) =\mathcal{T}\big(\alpha_1 r_1+ \beta_1 \Phi_0 +\alpha_1( \mathcal{E}_{\hat{r}_1} -\mathcal{P}^{-1}\mathcal{A}\mathcal{E}_{\hat{x}_1^a}) + \frac{\beta_1}{d}\mathcal{E}_{\hat{r}_0}   \big) \nonumber \\
&=\Phi_1+\alpha_1(\mathcal{E}_{\hat{r}_1}-\mathcal{P}^{-1}\mathcal{A} \mathcal{E}_{\hat{x}_1^a})+\frac{\beta_1}{d}\mathcal{E}_{\hat{r}_0}+\mathcal{E}_{\hat{\Phi}_1} 
\end{align*}
and
\begin{equation}
\begin{aligned}
\label{equation_chebyshevproof_phi2}
\hat{\Phi}_2&=\mathcal{T}(\alpha_2\hat{r}_2+\beta_2\hat{\Phi}_1)=\alpha_2r_2 +\beta_2 \Phi_1 + \alpha_2(\mathcal{E}_{\hat{r}_2}-\mathcal{P}^{-1}\mathcal{A}\mathcal{E}_{\hat{x}_2^a})
\\ &\hspace{.4cm}+\alpha_1\beta_2(\mathcal{E}_{\hat{r}_1}-\mathcal{P}^{-1}\mathcal{A} \mathcal{E}_{\hat{x}_1^a})  +\frac{\beta_1\beta_2}{d}\mathcal{E}_{\hat{r}_0}+\beta_2\mathcal{E}_{\hat{\Phi}_1}+\mathcal{E}_{\hat{\Phi}_2}\\
&=\Phi_2+\alpha_2(\mathcal{E}_{\hat{r}_2}-\mathcal{P}^{-1}\mathcal{A}\mathcal{E}_{\hat{x}_2^a})
+\alpha_1\beta_2(\mathcal{E}_{\hat{r}_1}-\mathcal{P}^{-1}\mathcal{A} \mathcal{E}_{\hat{x}_1^a})  
+\frac{\beta_1\beta_2}{d}\mathcal{E}_{\hat{r}_0}+\beta_2\mathcal{E}_{\hat{\Phi}_1}+\mathcal{E}_{\hat{\Phi}_2} \text{.} 
\end{aligned}
\end{equation}
Therefore,
\begin{align*}
\hat{x}_3&=\mathcal{T}(\hat{x}_2+\hat{\Phi}_2)
\\
&=\mathcal{T}\big(x_2+\Phi_2+\mathcal{E}_{\hat{x}_2^a}+\alpha_2(\mathcal{E}_{\hat{r}_2}-\mathcal{P}^{-1}\mathcal{A}\mathcal{E}_{\hat{x}_2^a})+\alpha_1\beta_2(\mathcal{E}_{\hat{r}_1}-\mathcal{P}^{-1}\mathcal{A} \mathcal{E}_{\hat{x}_1^a}) +\frac{\beta_1\beta_2}{d}\mathcal{E}_{\hat{r}_0}\\
&\hspace{.4cm}+\beta_2\mathcal{E}_{\hat{\Phi}_1}+\mathcal{E}_{\hat{\Phi}_2} \big)\\
&=x_3+\mathcal{E}_{\hat{\Phi}_{2}}+\mathcal{E}_{\hat{x}_{2}^a}+\mathcal{E}_{\hat{x}_3}+\alpha
_1\beta_2(\mathcal{E}_{\hat{r}_1} -\mathcal{P}^{-1}\mathcal{A}\mathcal{E}_{\hat{x}_1^a})+\alpha_2(\mathcal{E}_{\hat{r}_2}-\mathcal{P}^{-1}\mathcal{A}\mathcal{E}_{\hat{x}_2^a})+\frac{\beta_1\beta_2}{d}\mathcal{E}_{\hat{r}_0}\\ & \hspace{.4cm}+\beta_2\mathcal{E}_{\hat{\Phi}_1}
\\
&=x_3+\mathcal{E}_{\hat{\Phi}_2}+\mathcal{E}_{\hat{x}_2^a}+\mathcal{E}_{\hat{x}_3}+\sum \limits_{j=1}^2 \alpha_j ( \prod \limits_{i=1}^{3-j-1} \beta_{i+j} )(\mathcal{E}_{\hat{r}_j}-\mathcal{P}^{-1}\mathcal{A}\mathcal{E}_{\hat{x}_j^a})
+(\prod \limits_{j=1}^{2} \beta_j) \frac{1}{d} \mathcal{E}_{\hat{r}_0} \\ & \hspace{.4cm}+\beta_{2} \mathcal{E}_{\hat{\Phi}_1} \text{.}
\end{align*}
To conclude $l-1 \rightarrow l$ we first prove that
\begin{equation}\label{equation_chebyshev_proof_phi_i}
\begin{aligned}
\hat{\Phi}_{l-1}&=\Phi_{l-1}+\mathcal{E}_{\hat{\Phi}_{l-1}}+\sum \limits_{j=1}^{l-1} \alpha_j ( \prod \limits_{i=1}^{l-j-1} \beta_{i+j})(\mathcal{E}_{\hat{r}_j}-\mathcal{P}^{-1}\mathcal{A}\mathcal{E}_{\hat{x}_j^a})+(\prod \limits_{j=1}^{l-1} \beta_j)\frac{1}{d}\mathcal{E}_{\hat{r}_0}
\\ &\hspace{.4cm}+\sum \limits_{j=1}^{l-2} (\prod \limits_{i=1}^{l-j-1}\beta_{i+j} )\mathcal{E}_{\hat{\Phi}_j}
\end{aligned}
\end{equation}
under the assumption that this equation holds for $\hat{\Phi}_{l-2}$. For $\hat{\Phi}_2$, this is true since from \cref{equation_chebyshevproof_phi2}, we have that
\begin{align*}
\hat{\Phi}_2&=\Phi_2+\mathcal{E}_{\hat{\Phi}_2}+ \sum \limits_{j =1}^2 \alpha_j (\prod \limits_{i =1}^{2-j} \beta_{i+j})(\mathcal{E}_{\hat{r}_j}-\mathcal{P}^{-1}\mathcal{A} \mathcal{E}_{\hat{x}_j^a})+( \prod \limits_{j=1}^{2} \beta_j ) \frac{1}{d} \mathcal{E}_{\hat{r}_0}+\beta_2\mathcal{E}_{\hat{\Phi}_1} \text{.}
\end{align*}
The induction step for \cref{equation_chebyshev_proof_phi_i} is as follows.
\begin{align*}
\hat{\Phi}_{l-1}&=\mathcal{T}(\alpha_{l-1}\hat{r}_{l-1}+\beta_{l-1}\hat{\Phi}_{l-2} )\\
&=\alpha_{l-1}r_{l-1}+\beta_{l-1}\Phi_{l-2}+\mathcal{E}_{\hat{\Phi}_{l-1}}+\alpha_{l-1}(\mathcal{E}_{\hat{r}_{l-1}}-\mathcal{P}^{-1}\mathcal{A}\mathcal{E}_{\hat{x}_{l-1}^a})+\beta_{l-1} \mathcal{E}_{\hat{\Phi}_{l-2}}\\ & \hspace{.4cm}+\beta_{l-1} \sum \limits_{j=1}^{l-2} \alpha_j (\prod \limits_{i=1}^{l-j-2} \beta_{i+j} )(\mathcal{E}_{\hat{r}_j}-\mathcal{P}^{-1}\mathcal{A}\mathcal{E}_{\hat{x}_j^a})+\beta_{l-1} (\prod \limits_{j=1}^{l-2} \beta_j) \frac{1}{d} \mathcal{E}_{\hat{r}_0}
\\
&\hspace*{.4cm}+\beta_{l-1}\sum \limits_{j=1}^{l-3} (\prod \limits_{i=1}^{l-j-2} \beta_{i+j}) \mathcal{E}_{\hat{\Phi}_j}\\
&=\Phi_{l-1}+\mathcal{E}_{\hat{\Phi}_{l-1}}+\sum \limits_{j=1}^{l-1}\alpha_j (\prod \limits_{i=1}^{l-j-1} \beta_{i+j})(\mathcal{E}_{\hat{r}_j}-\mathcal{P}^{-1} \mathcal{A} \mathcal{E}_{\hat{x}_j^a})+(\prod \limits_{j=1}^{l-1}\beta_j)\frac{1}{d}\mathcal{E}_{\hat{r}_0}\\
&\hspace*{.4cm}+\sum \limits_{j=1}^{l-2} (\prod \limits_{i=1}^{l-j-1}\beta_{i+j})\mathcal{E}_{\hat{\Phi}_j}
\end{align*}
With this, it follows that
\begin{align*}
\hat{x}_l&=\mathcal{T}(\hat{x}_{l-1}+\hat{\Phi}_{l-1})\\
&=x_{l}+\mathcal{E}_{\hat{\Phi}_{l-1}}+\mathcal{E}_{\hat{x}_{l-1}^a}+\mathcal{E}_{\hat{x}_l}+\sum \limits_{j=1}^{l-1}\alpha_j (\prod \limits_{i=1}^{l-j-1} \beta_{i+j})(\mathcal{E}_{\hat{r}_j}-\mathcal{P}^{-1}\mathcal{A} \mathcal{E}_{\hat{x}_j^a})
\\
&\hspace*{.4cm}+(\prod \limits_{j=1}^{l-1} \beta_j) \frac{1}{d}\mathcal{E}_{\hat{r}_0}+\sum \limits_{j=1}^{l-2} ( \prod \limits_{i=1}^{l-j-1} \beta_{i+j}) \mathcal{E}_{\hat{\Phi}_j}\text{.}
\end{align*}
\end{proof}
\begin{theorem}[Approximation Error of ChebyshevT]\label{theorem_chebyshevt_appr_error1}
Let $\sigma_\mathcal{P}:=\|\mathcal{P}^{-1}\mathcal{A}\|_2$. Under the assumptions of \cref{lemma_representation_chebyshevt_iterates}, the following error bounds hold for a truncation operator of accuracy $\epsilon >0$.
\begin{align*}
e_1&:=\|\hat{x}_l-x_l\|_2=\|\mathcal{E}_{\hat{x}_1^a}\|_2\leq \epsilon \big(1+\frac{1}{|d|}\|\mathcal{P}^{-1}\|_2 \big)\text{,}\\
e_2&:=\|\hat{x}_2-x_2\|_2 \leq \epsilon\Big( 3+|\alpha_1|\sigma_\mathcal{P} + \big(|\alpha_1|+\frac{1+|\beta_1|+|\alpha_1|\sigma_\mathcal{P}}{|d|} \big) \|\mathcal{P}^{-1}\|_2 \Big) \quad \text{and}\\
e_l &:=\|\hat{x}_l-x_l \|_2\leq \big(1+|\alpha_{l-1}|\sigma_\mathcal{P}\big)e_{l-1}+\sum \limits_{j=1}^{l-2} |\alpha_j|e_j \sigma_\mathcal{P} \prod \limits_{i=1}^{l-j-1}|\beta_{i+j}|\\
&\hspace*{2.5cm}+\epsilon \Big(2+\sum \limits_{j=1}^{l-2} \prod \limits_{i=1}^{l-j-1}|\beta_{i+j}| +\big( \sum \limits_{j=1 }^{l-1} |\alpha_j|\prod \limits_{i=1}^{l-j-1}|\beta_{i+j}| \\ & \hspace{2.5cm} +\frac{\prod \limits_{j=1}^{l-1} |\beta_j|}{|d|}  \big)\|\mathcal{P}^{-1}\|_2 \Big) \quad \text{for}  \quad l \geq 3 \text{.}
\end{align*}
\begin{proof}
Let $\epsilon_R >0$ such that $\|\mathcal{E}_{\hat{r}_i}\|_2 \leq \epsilon_R$ for all $i \in \{1,...,l\}$.\\
$l=1$:
\begin{align*}
e_1=\|\mathcal{E}_{\hat{x}_1^a}\|_2\leq \epsilon+\frac{1}{|d|}\epsilon_R
\end{align*}
$l=2$:
\begin{align*}
e_2&=\| \mathcal{E}_{\hat{\Phi}_1} +\frac{1}{d}\mathcal{E}_{\hat{r}_0}+\mathcal{E}_{\hat{x}_1}+\mathcal{E}_{\hat{x}_2}+\alpha_1\big(\mathcal{E}_{\hat{r}_1}-\mathcal{P}^{-1}\mathcal{A}(\frac{1}{d}\mathcal{E}_{\hat{r}_0}+\mathcal{E}_{\hat{x}_1})\big)+\frac{\beta_1}{d}\mathcal{E}_{\hat{r}_0}  \|_2\\
&\leq \|\mathcal{E}_{\hat{\Phi}_1}+\mathcal{E}_{\hat{x}_1} +\mathcal{E}_{\hat{x}_2} \|_2+|\alpha_1|\sigma_\mathcal{P}\|\mathcal{E}_{\hat{x}_1}\|_2 +|\alpha_1|\|\mathcal{E}_{\hat{r}_1}\|_2
+(1+|\alpha_1|\sigma_\mathcal{P}+|\beta_1|)\|\frac{\mathcal{E}_{\hat{r}_0}}{d}\|_2
\\
&\leq \big(3+|\alpha_1|\sigma_\mathcal{P}\big)\epsilon+\big(|\alpha_1|+\frac{1+|\beta_1|+|\alpha_1|\sigma_\mathcal{P}}{|d|} \big)\epsilon_R
\end{align*}
$l \geq 3$:
\begin{align*}
e_l&=\Big\|\mathcal{E}_{\hat{\Phi}_{l-1}}+\mathcal{E}_{\hat{x}_{l-1}^a}+\mathcal{E}_{\hat{x}_l}+\sum \limits_{j=1}^{l-1} \alpha_j (\prod \limits_{i=1}^{l-j-1}\beta_{i+j})(\mathcal{E}_{\hat{r}_j}-\mathcal{P}^{-1}\mathcal{A}\mathcal{E}_{\hat{x}_j^a}) +(\prod \limits_{j=1}^{l-1} \beta_j )\frac{1}{d}\mathcal{E}_{\hat{r}_0} \\ & \hspace{.4cm}+\sum \limits_{j=1}^{l-2} (\prod \limits_{i=1}^{l-j-1}\beta_{i+j})\mathcal{E}_{\hat{\Phi}_j} \Big\|_2\\
&\leq \underbrace{\|\mathcal{E}_{\hat{x}_{l-1}^a}\|_2}_{=e_{l-1}}+ |\alpha_{l-1}| \sigma_\mathcal{P} \|\mathcal{E}_{\hat{x}_{l-1}^a}\|_2+\sum \limits_{j=1}^{l-2}|\alpha_j|\sigma_\mathcal{P}\|\mathcal{E}_{\hat{x}_j^a}\|_2\prod \limits_{i=1}^{l-j-1}|\beta_{i+j}| + \|\mathcal{E}_{\hat{\Phi}_{l-1}} +\mathcal{E}_{\hat{x}_l}\|_2\\
&\hspace*{.4cm}+\sum \limits_{j=1}^{l-2} \|\mathcal{E}_{\hat{\Phi}_j}\|_2 \prod \limits_{i =1}^{l-j-1} |\beta_{i+j} |+ \sum \limits_{j=1}^{l-1}  |\alpha_j|\|\mathcal{E}_{\hat{r}_j}\|_2 \prod \limits_{i=1}^{l-j-1} |\beta_{i+j}| +\frac{\prod \limits_{j =1}^{l-1} |\beta_j|}{|d|}  \|\mathcal{E}_{\hat{r}_0}\|_2
\\
&\leq
\big(1+|\alpha_{l-1}|\sigma_\mathcal{P}\big)e_{l-1}+\sum \limits_{j=1}^{l-2}|\alpha_j|e_j \sigma_\mathcal{P} \prod \limits_{i=1}^{l-j-1}|\beta_{i+j}|+\big(2+\sum \limits_{j=1}^{l-2} \prod \limits_{i =1}^{l-j-1} |\beta_{i+j}|\big)\epsilon
\\
&\hspace*{.4cm}
+\big(\sum \limits_{j=1}^{l-1 }|\alpha_j| \prod \limits_{i=1}^{l-j-1}|\beta_{i+j}|    +\frac{\prod \limits_{j=1}^{l-1}|\beta_j| }{|d|}\big)\epsilon_R
\end{align*}
The estimation $\epsilon_R \leq \epsilon \|\mathcal{P}^{-1}\|_2$ leads to the claimed error bounds.
\end{proof}
\end{theorem}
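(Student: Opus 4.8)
The plan is to feed the closed-form expressions for the accumulated error $\mathcal{E}_{\hat{x}_l^a}=\hat{x}_l-x_l$ supplied by \cref{lemma_representation_chebyshevt_iterates} into the triangle inequality and then bound each elementary contribution separately. To keep the two cases of \cref{remark_cheby_precond_eps1} transparent, I would first introduce an auxiliary constant $\epsilon_R>0$ with $\|\mathcal{E}_{\hat{r}_i}\|_2\le\epsilon_R$ for all $i$, carry out the whole estimate in terms of $\epsilon$ and $\epsilon_R$, and only substitute $\epsilon_R\le\epsilon\|\mathcal{P}^{-1}\|_2$ at the very end; the idealised case $\epsilon_R\le\epsilon$ (relegated to \cref{appendix_error1_chebyshevt}) then follows verbatim from the same computation.

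For $l=1$ and $l=2$ I would apply the triangle inequality directly to the two explicit formulas of \cref{lemma_representation_chebyshevt_iterates}. The only nonelementary step is the estimate $\|\mathcal{P}^{-1}\mathcal{A}w\|_2\le\sigma_\mathcal{P}\|w\|_2$, which follows from submultiplicativity of the operator norm together with the definition $\sigma_\mathcal{P}:=\|\mathcal{P}^{-1}\mathcal{A}\|_2$; applying it to the term $\mathcal{P}^{-1}\mathcal{A}\mathcal{E}_{\hat{x}_1^a}$ produces the $|\alpha_1|\sigma_\mathcal{P}$ factors. Counting the pure truncation errors that each contribute $\epsilon$ (one for $l=1$, namely $\mathcal{E}_{\hat{x}_1}$; three for $l=2$, namely $\mathcal{E}_{\hat{\Phi}_1}$, $\mathcal{E}_{\hat{x}_1}$ and $\mathcal{E}_{\hat{x}_2}$) and collecting the residual errors multiplied by $\tfrac1{|d|}$, $|\alpha_1|$ and $|\beta_1|$ yields the claimed $e_1$ and $e_2$ bounds after the substitution of $\epsilon_R$.

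For $l\ge3$ I would apply the triangle inequality to the general representation of $\hat{x}_l-x_l$. The decisive observation is that $\|\mathcal{E}_{\hat{x}_{l-1}^a}\|_2=e_{l-1}$ and, more generally, $\|\mathcal{E}_{\hat{x}_j^a}\|_2=e_j$; consequently each propagated term $\mathcal{P}^{-1}\mathcal{A}\mathcal{E}_{\hat{x}_j^a}$ contributes $\sigma_\mathcal{P}e_j$, weighted by $|\alpha_j|\prod_{i=1}^{l-j-1}|\beta_{i+j}|$. This is exactly what turns the estimate into the stated \emph{recursion} rather than a closed form: the quantities $\mathcal{E}_{\hat{x}_j^a}$, which are not controlled by $\epsilon$ alone, are simply kept on the right-hand side as $e_j$. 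The remaining terms split cleanly into $\epsilon$-contributions (the two truncations $\mathcal{E}_{\hat{\Phi}_{l-1}}$ and $\mathcal{E}_{\hat{x}_l}$, together with the $\mathcal{E}_{\hat{\Phi}_j}$ weighted by $\prod_{i=1}^{l-j-1}|\beta_{i+j}|$) and $\epsilon_R$-contributions (the residual errors $\mathcal{E}_{\hat{r}_j}$ weighted by $|\alpha_j|\prod_{i=1}^{l-j-1}|\beta_{i+j}|$, plus the $\tfrac1{|d|}\prod_{j=1}^{l-1}|\beta_j|$ term coming from $\mathcal{E}_{\hat{r}_0}$). Substituting $\epsilon_R\le\epsilon\|\mathcal{P}^{-1}\|_2$ completes the argument.

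Since every step is just the triangle inequality plus submultiplicativity, I expect the main obstacle to be purely organisational: correctly separating which elementary errors are controlled by $\epsilon$ versus by $\epsilon_R$, and tracking the index-shifted products $\prod_{i=1}^{l-j-1}|\beta_{i+j}|$ without miscounting factors when the sums are regrouped. The recursive appearance of $e_j$ on the right-hand side is a deliberate feature of the statement, not a difficulty; it is forced by the fact that $\mathcal{E}_{\hat{x}_j^a}$ genuinely cannot be bounded by a fixed multiple of $\epsilon$ independently of the earlier iterates.
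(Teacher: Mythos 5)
Your proposal is correct and follows essentially the same route as the paper's own proof: the triangle inequality applied to the representations from \cref{lemma_representation_chebyshevt_iterates}, submultiplicativity to extract $\sigma_\mathcal{P}$, the identification $\|\mathcal{E}_{\hat{x}_j^a}\|_2=e_j$ to obtain the recursion, and the deferred substitution $\epsilon_R\leq\epsilon\|\mathcal{P}^{-1}\|_2$ at the end. Your bookkeeping of the $\epsilon$- versus $\epsilon_R$-contributions and of the products $\prod_{i=1}^{l-j-1}|\beta_{i+j}|$ matches the paper's term-by-term accounting.
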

\section{Numerical Evaluation of the Error Bounds}
\label{chap_numev1}
In algorithm and software implementations, the accuracy of a truncation operator depends on the truncation rank. If one chooses a rank $R$, the iterate of the GMREST or the ChebyshevT method is truncated to, the accuracy of the truncation operator is still unknown. Most truncation techniques like the HOSVD for hierarchical Tucker tensors (Section 8.3 and Section 10.1.1 of \cite{hackbusch_tensor1}) or the TT-rounding for TT tensors (Algorithm 1 and 2 of \cite{paper_oseledets_tensor_train1}) provide quasi optimality for tensors of order $d>2$. For tensors of order $d=2$ they even provide optimality in the sense that  the result of the truncation of a matrix to rank $R$ is indeed the best rank $R$ approximation of the matrix. Nonetheless, since the singular value decay of the argument to be truncated is, in general, not known, the truncation operator will be simulated for a numerical evaluation of the error bounds of \cref{theorem_gmrest_error_bound1} and \cref{theorem_chebyshevt_appr_error1}. Using the MATLAB routine \texttt{rand()}, a vector
\begin{align*}
\tilde{z} \in \mathbb{R}^{Mm}
\end{align*}
with entries that are uniformly distributed in the interval $(0,1)$ is constructed first. The argument $x \in \mathbb{R}^{Mm}$ is then truncated using the truncation simulator
\begin{align}\label{equation_trunc_op1}
\mathcal{T}_s(x):=x+\frac{\epsilon}{\|\tilde{z}\|_2}\tilde{z} \text{.}
\end{align}
Of course, $\tilde{z}$ is computed anew every time $\mathcal{T}_s(\cdot)$ is applied. For this subsection, all the computations are therefore made in the full format and whenever a truncation operator is applied, the truncation simulator $\mathcal{T}_s(\cdot)$ is evaluated. The main advantage of this strategy is that
\begin{align*}
\|\mathcal{T}_s(x)-x\|_2=\epsilon \text{ } \forall x \in \mathbb{R}^{Mm}\text{.}
\end{align*}
A truncation operator based on the singular value decomposition does not provide such a reliable behavior. Let
\begin{align*}
\{\sigma_i\}_{i \in \{1,...,\min\{M,m\}\}} \text{, } \sigma_1 \geq \sigma_2 \geq ... \geq \sigma_{\min \{M,m\}}
\end{align*} be the singular values of $\operatorname{vec}^{-1}(x)$ and $\exists k \in \{1,...,\min\{M,m\}-1\}$ such that, e.g.,
\begin{align*}
\sigma_{k}=10^{-4} \quad \text{and} \quad \sigma_{k+1}=10^{-10} \text{.}
\end{align*}
A truncation operator with accuracy $\epsilon=10^{-5}$ based on the singular value decomposition would provide an approximation of $x$ with accuracy $10^{-10}$ in this example. So in this sense, the truncation simulator $\mathcal{T}_s$ yields the worst case error every time it is applied.
\subsection{GMREST Error Bound}\label{sec_gmrest_error1_eval1}
\begin{figure}[tbhp]
\centering
\subfloat[$\epsilon = 10^{-12}$. All entries of the start matrix are set to $1$.]{\label{figure_errorgmrest1_left1}\includegraphics[scale=0.7]{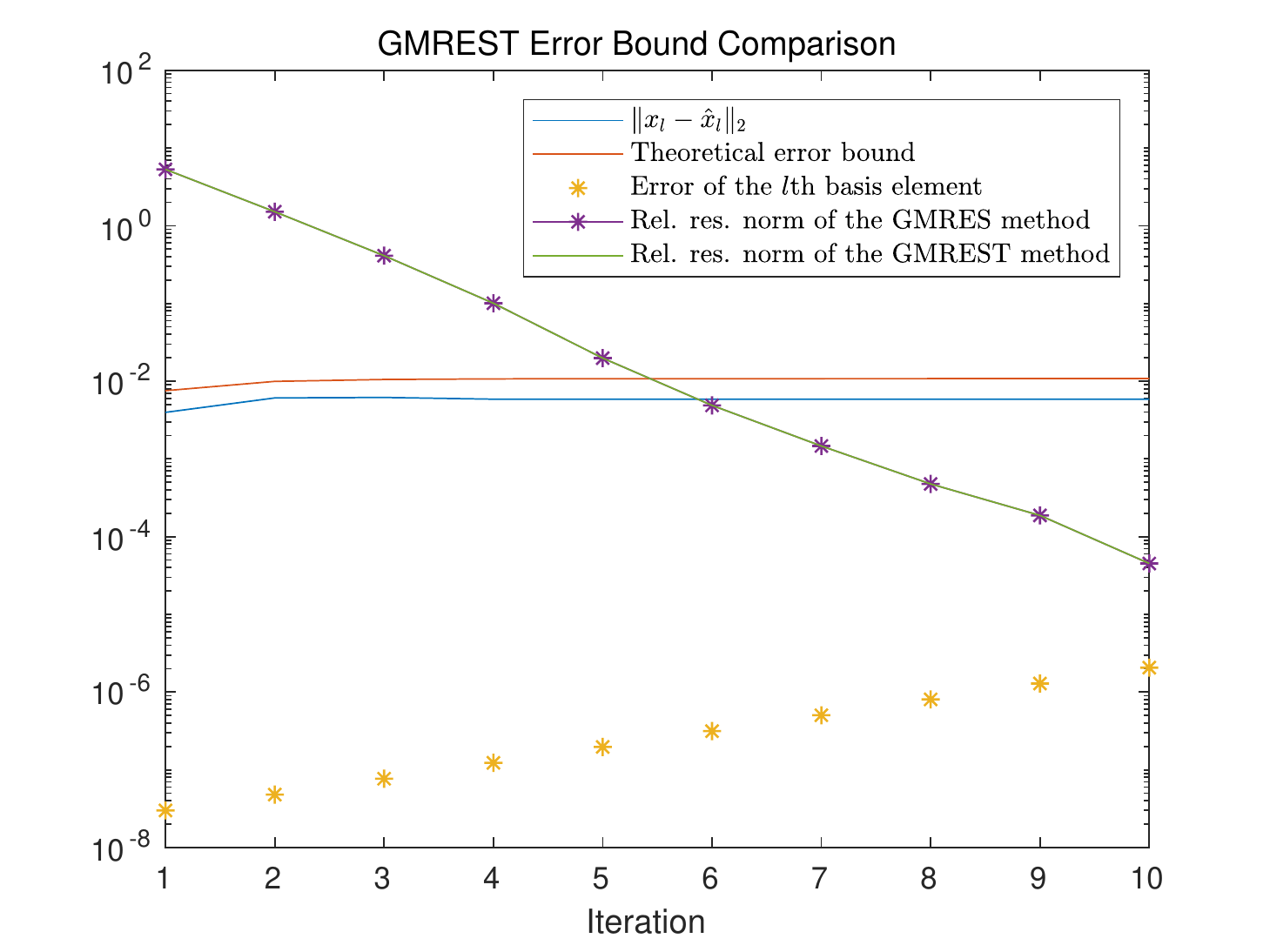}}
\\
\subfloat[$\epsilon = 10^{-12}$. $\hat{x}_6$ is used as start matrix.]{\label{figure_errorgmrest1_right1}\includegraphics[scale=.7]{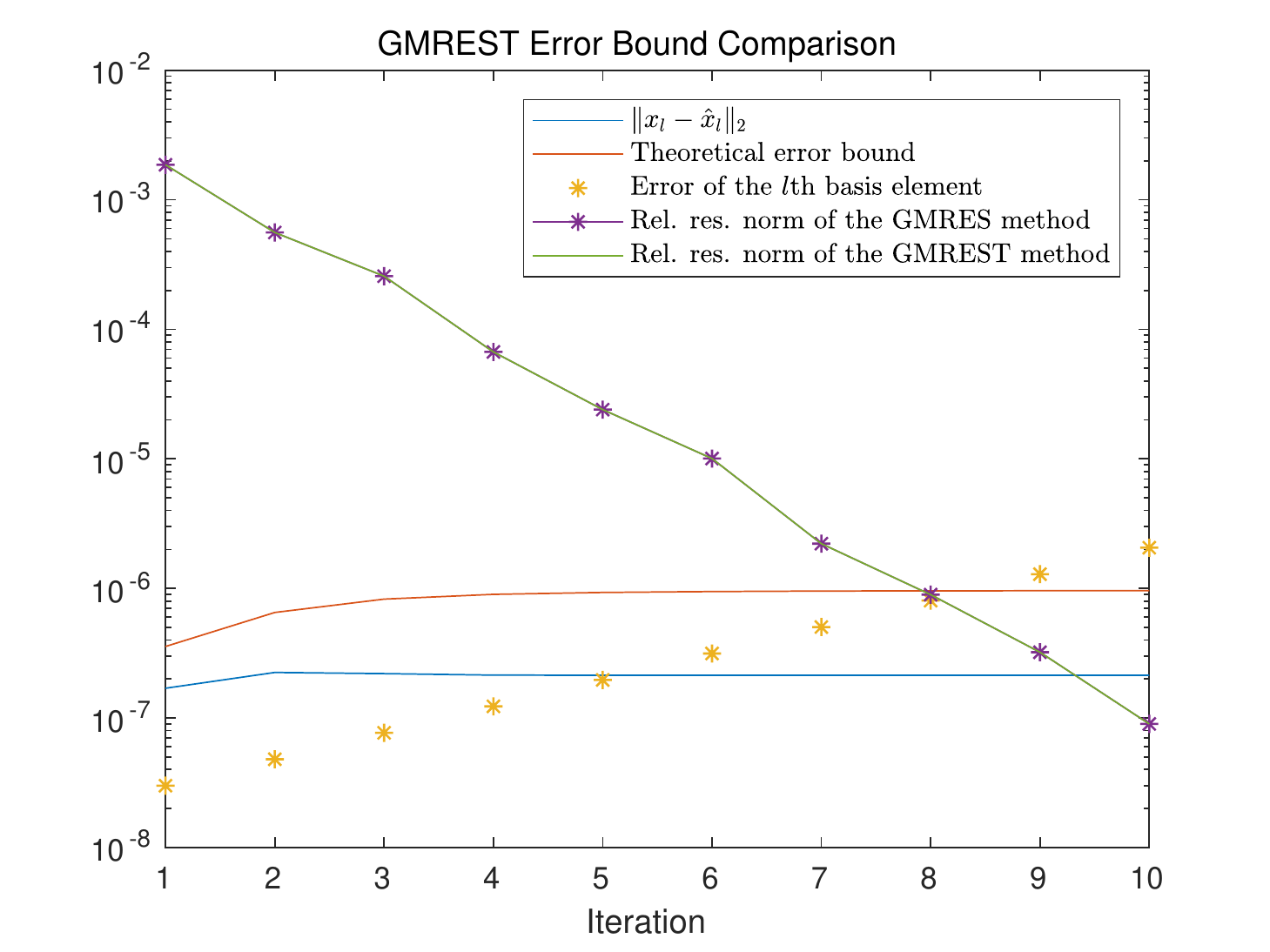}}
\caption{A numerical evaluation of the theoretical GMREST error bound.}
\label{fig:testfig}
\end{figure}
We consider the error bound from  \cref{theorem_gmrest_error_bound1} that reads
\begin{align*}
\|\hat{x}_l-x_l\|_2&\leq \epsilon \sum \limits_{j=1}^{l}|d_j| \big( \sum \limits_{i=1}^{j-1} \sigma_\mathcal{P}^{i-1} +\|\mathcal{P}^{-1}\|_2 \sigma_\mathcal{P}^{j-1}+j-1 \big)+\sum \limits_{j=1}^{l} |c_j-d_j| +\epsilon l \text{.}
\end{align*}
This theoretical error bound is compared with 
\begin{align}\label{equation_errorbound_gmrestnum1}
\|x_l-\hat{x}_l\|_2 \text{,}
\end{align}
where $x_l$ denotes the $l$th GMRES iterate and $\hat{x}_l$ the $l$th GMREST iterate. As just explained, everything is computed in the full format and every time a truncation is involved (which affects the GMREST iterate $\hat{x}_l$ only), $\mathcal{T}_s$ from \cref{equation_trunc_op1} is evaluated. The $3$d jetty from \cref{subsection_3d_jetty1} is considered with size $M=4095$ and a three parameter discretization with a total of $m=8000$ parameter combinations as used in \cref{subsection_3par_discr1}. We use the estimate $\sigma_{\mathcal{P}}\approx d+c$ with $c, d$ from \cref{subsection_chebyshevt_numresults1}.
In addition, the basis element error bound from \cref{lemma_errors_basis_gmrest1} is plotted for a truncation accuracy of $\epsilon=10^{-12}$. If one starts with a matrix whose entries are all set to $1$, the error bound \cref{equation_errorbound_gmrestnum1} states that $\| x_{10}-\hat{x}_{10}\|_2$ is not bigger than $\approx 10^{-2}$, which can be seen in \cref{figure_errorgmrest1_left1}. The reason for such a tolerant bound is that the first coefficients $d_1, d_2, ...$ are very big if the initial guess is bad. But if both methods are restarted with $\hat{x}_6$ as start matrix, these coefficients become smaller as shown in \cref{figure_errorgmrest1_right1}. Also, the relative residual norm of the GMRES iterate, $\frac{\|B-F(x)\|_F}{\|B\|_F}$, and the one of the GMREST iterate, $\frac{\|B-F(\hat{x})\|_F}{\|B\|_F}$, are plotted. So even though $\|x_l-\hat{x}_l\|_2$ stagnates, the residual of the truncated approach still decreases.
\par
The dominating terms are
\begin{align*}
\epsilon \sum \limits_{j=1}^{l} |d_j| \|\mathcal{P}^{-1}\|_2 \sigma_\mathcal{P}^{j-1} \quad \text{ and } \quad \epsilon \sum \limits_{j=1}^{l} |d
_j| \sum \limits_{i=1}^{j-1} \sigma_\mathcal{P}^{i-1} \text{.}
\end{align*}
As pointed out above, $d_1, d_2, ...$ are big for a bad initial guess. Then, in addition, $\epsilon$ can not compensate the (exponential) growth of $\sigma_\mathcal{P}^{j-1}$ for $j \in \{1,...,l\}$. Notice that $\sigma_\mathcal{P}\approx 1.6$ in this example. Since $\|\mathcal{P}^{-1}\|_2$ is rather big, namely $\approx 3 \cdot 10^4$ in this example, the former of these two terms is bigger for the first $10$ iterations. Furthermore, $1.6^{10} \approx 10^2$ and the moduli of the coefficients $d_j$ become smaller the bigger the iteration count is. This is why we do not see an exponential growth of the bound in \cref{figure_errorgmrest1_left1} and \cref{figure_errorgmrest1_right1}.
\subsection{ChebyshevT Error Bound}
\begin{figure}[tbhp]
\centering
\subfloat[$\epsilon = 10^{-12}$]{\label{figure_errorchebyshevt_eps12}\includegraphics[scale=0.7]{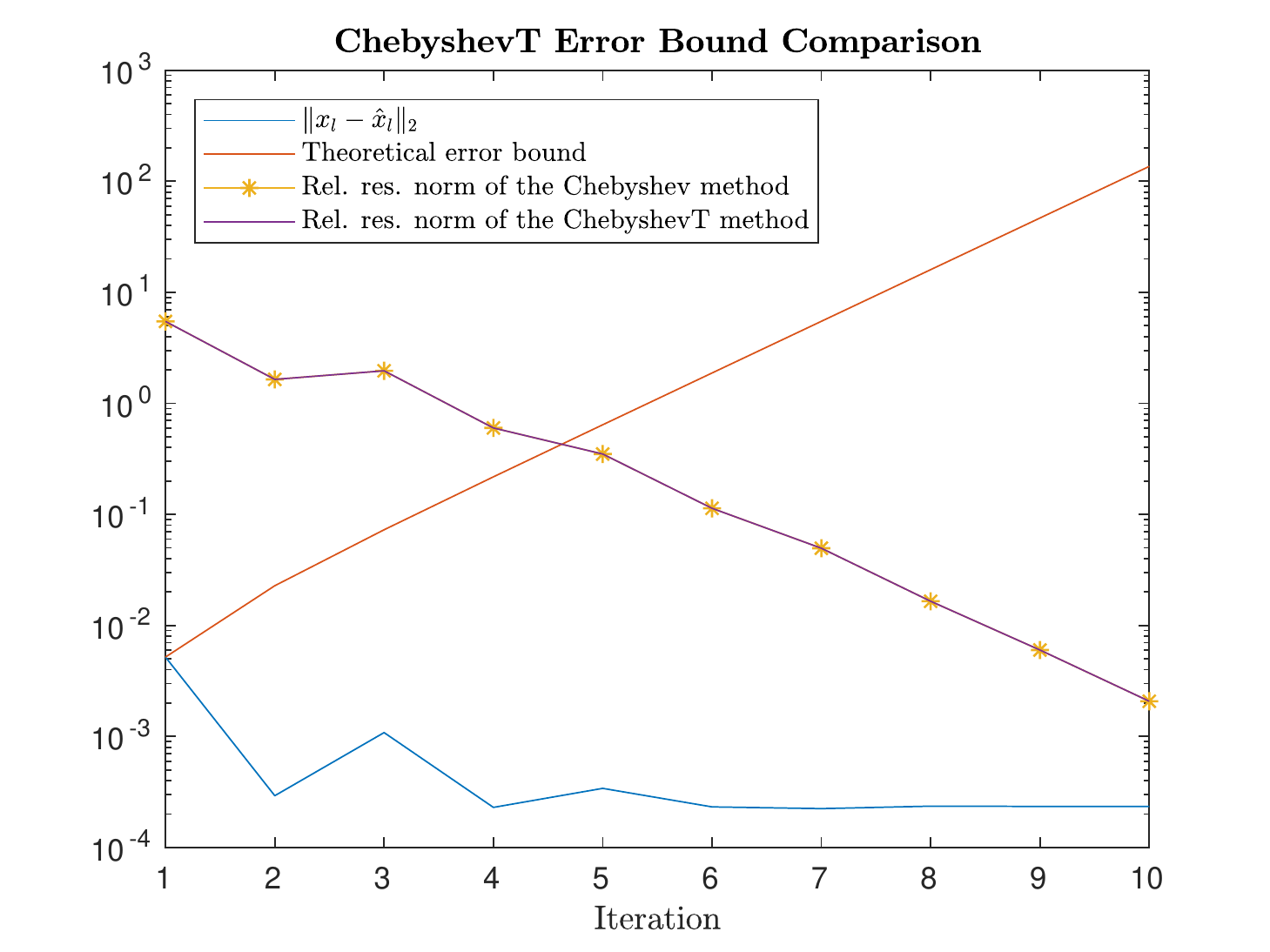}}
\\
\subfloat[$\epsilon = 10^{-6}$]{\label{figure_errorchebyshevt_eps3}\includegraphics[scale=.7]{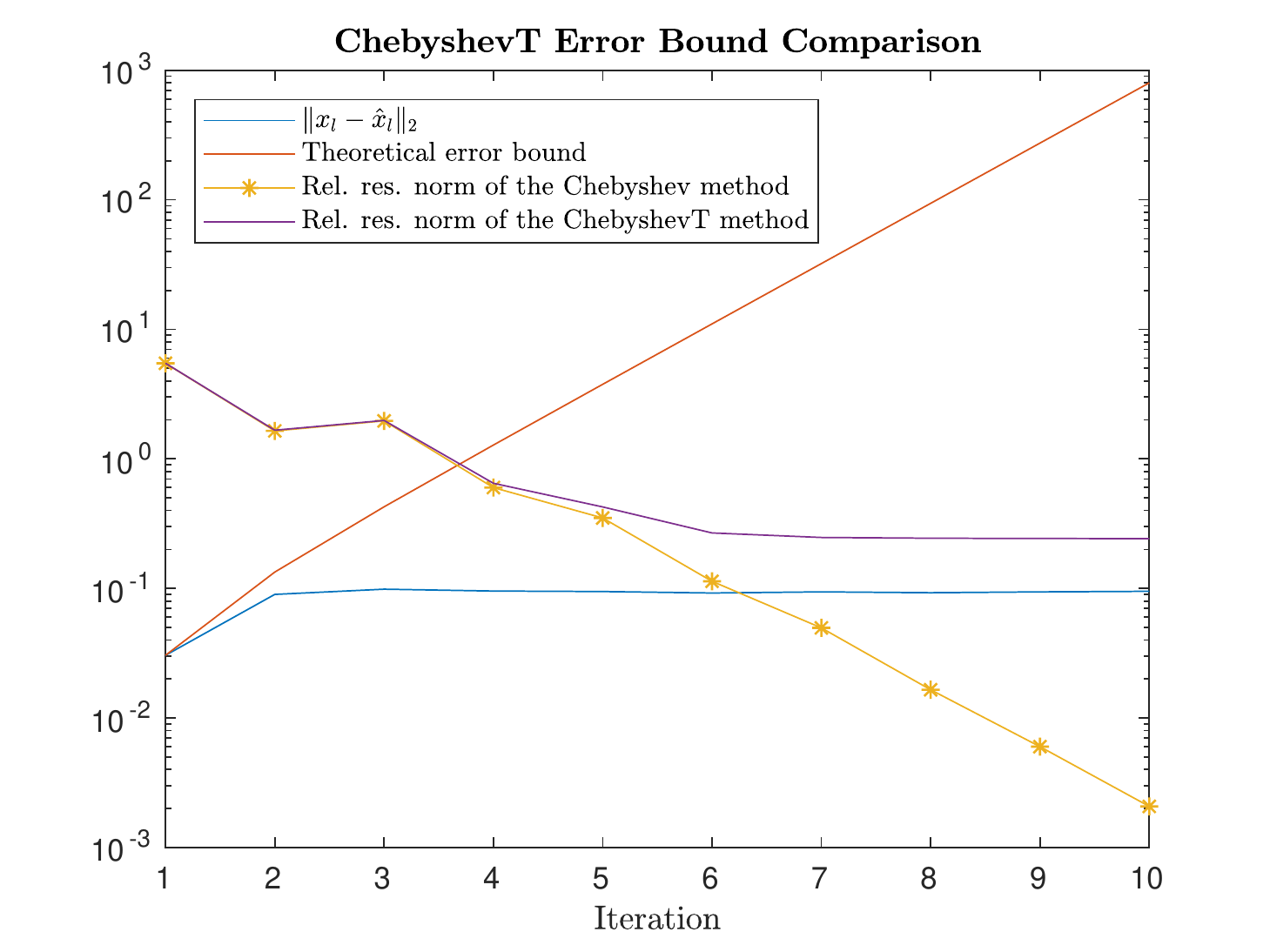}}
\caption{A numerical evaluation of the error bound from \cref{theorem_chebyshevt_appr_error1}. All entries of the start matrix are set to $1$.}
\label{figure_chebyshevt_errorbound1}
\end{figure}
In this subsection, the approximation error from \cref{theorem_chebyshevt_appr_error1} is numerically examined. Let $x_l$ be the $l$th Chebyshev iterate and $\hat{x}_l$ be the $l$th ChebyshevT iterate. For the ChebyshevT method, similar to the GMRES comparison, $\mathcal{T}_s$ is used as truncation operator.
\begin{remark} If preconditioned with $\mathcal{P}_T$, in theory, 
\begin{align*}
\epsilon_R\leq \epsilon \|\mathcal{P}_T^{-1}\|_2
\end{align*}
holds as mentioned in  \cref{remark_cheby_precond_eps1}. But, due to the bad condition of the preconditioner and machine precision, in practice, $\epsilon_R$ is often bigger. The line
\begin{align*}
\textit{Find } \hat{R}_i \textit{ such that } P\hat{R}_i=\mathcal{T}\big(\hat{B}-F(\hat{X})\big) \text{.}
\end{align*}
is executed in every iteration
of \cref{algorithm_chebyshevt_truncated1} which sometimes leads to real errors that are higher than the error bound from \cref{theorem_chebyshevt_appr_error1}. In contrast to this, the line
\begin{align*}
\textit{Find } \hat{W} \in T_R \textit{ such that } P \hat{W}=\mathcal{T}\big(F(\hat{V}_i)\big)\text{.}
\end{align*}
in \cref{algorithm_gmres_truncated1} is less vulnerable. To circumvent this problem, for the error bound of \cref{theorem_chebyshevt_appr_error1}, the error $\epsilon_R$ is computed explicitly. This value instead of $\epsilon \|\mathcal{P}_T^{-1}\|_2$ is then used to compute the theoretical error bounds that are compared with the real errors.
\end{remark}
We use the same configuration as used in \cref{sec_gmrest_error1_eval1}. Even though the theoretical error bound literally explodes, for $\epsilon = 10^{-12}$, the truncated method converges roughly as good as the non truncated method until iteration $10$ for the $3$d jetty model from \cref{subsection_3d_jetty1} as shown in \cref{figure_errorchebyshevt_eps12}. But the convergence of the ChebyshevT method deteriorates remarkably after $4$ iterations for $\epsilon = 10^{-6}$ if compared to the full approach (see \cref{figure_errorchebyshevt_eps3}).
\par
The two terms in the error bound that are not multiplied with $\epsilon$ are
\begin{align}
&(1+|\alpha_{l-1}|\sigma_\mathcal{P})e_{l-1} \quad \text{and} \label{equation_cheberror1_first_term1} \\
& \sum \limits_{j=1}^{l-2} |\alpha_j|e_j \sigma_\mathcal{P} \overbrace{\prod \limits_{i=1}^{l-j-1}|\beta_{i+j}|}^{(\star )} \text{.} \label{equation_cheberror1_second_term1}
\end{align}
\newpage
The coefficients $\beta_i$ in the Chebyshev method have norms that are smaller than $1$. This becomes clear if one considers the recursive computation formula for the Chebyshev polynomials (see (2.4) in \cite{paper_chebyshev_manteuffel}) evaluated at $\frac{d}{c}$ with $|c|<|d|$. The coefficients $\beta_i$ are then given as a fraction where the numerator has a norm that is smaller than the denominator. The product $(\star)$ becomes smaller the higher the iteration number is and therefore, the term \cref{equation_cheberror1_second_term1} becomes negligibly small, at least if it is compared with the term \cref{equation_cheberror1_first_term1}. For our configuration, $c=0.6$. Hence, the coefficients $\alpha_i$ are bigger than $1$ on the other hand (see (2.24) in \cite{paper_chebyshev_manteuffel}). For $\sigma_\mathcal{P} \approx c+d =1.6$ 
\begin{align*}
1+|\alpha_{l-1}|\sigma_{\mathcal{P}}\geq 2.6 \text{.}
\end{align*}
This explains why the first term in \cref{theorem_chebyshevt_appr_error1}, the term \cref{equation_cheberror1_first_term1}, dominates the error bound and makes it explode. Thus, when using ChebyshevT, one has to be very careful about the choice of the truncation tolerance.
\section{Numerical Examples}
\label{chap_numex1}
\mbox{}
\begin{figure}[tbhp]
\centering
\includegraphics[scale=.07]{./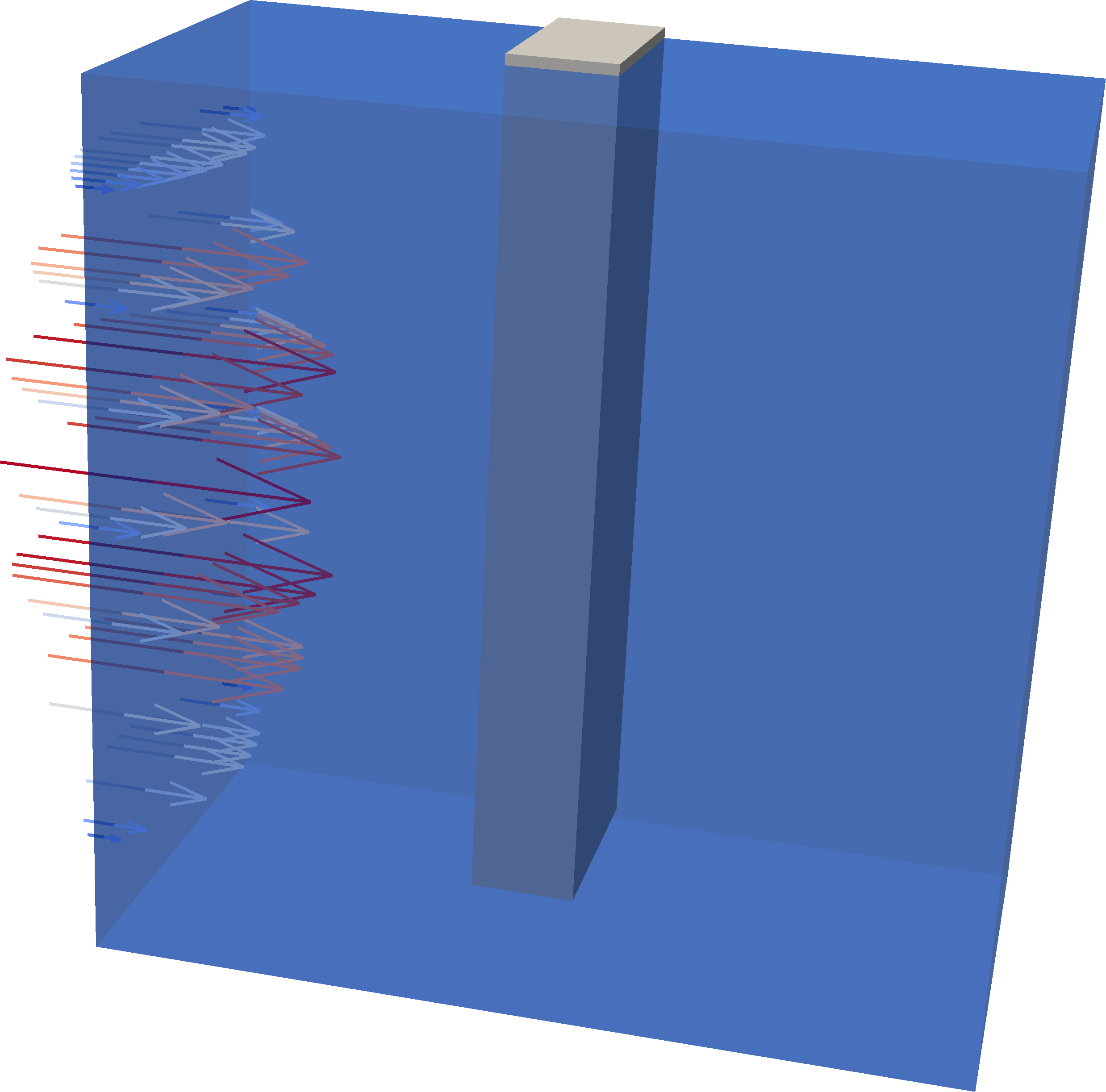}
\caption{The initial configuration of the jetty where the Dirichlet data simulate an inflow from the left.}
\label{figure_initial_geometry1}
\end{figure}
\subsection{A Three Dimensional Jetty in a Channel}\label{subsection_3d_jetty1}
The geometric configuration of a $3$d jetty in a channel is given by
\begin{align*}
\Omega:=(0,8) \times (0,8) \times (0,4) \text{, } S:=(3,4) \times (0,8) \times (0,2) \text{ and }
F:=\Omega \setminus \bar{S} \text{.}
\end{align*}
With the velocity
\begin{align*}
\left( \begin{array}{c}v_1\\v_2\\v_3
\end{array} \right)  \in \mathbb{R}^{3} \text{, the deformation } \left( \begin{array}{c}u_1\\u_2\\u_3
\end{array} \right) \in \mathbb{R}^3 \text{ and coordinates } (x,y,z) \in \bar{\Omega}\text{,}
\end{align*}
the Dirichlet inflow on the left boundary is given by
\begin{align*}
v=\left( \begin{array}{c}\frac{1}{2}y(8-y)z(4-z)\\0\\0
\end{array} \right)  \text{ if } x=0 \text{.}
\end{align*} The geometric configuration is illustrated in \cref{figure_initial_geometry1}. On the right, for $x=8$, do nothing boundary outflow conditions as discussed in Section 2.4.2 of \cite{richter_fsi1} hold. The surface is at $y=8$. There, $v_2$ and $u_2$ vanish. 
Everywhere else on $\partial(\Omega)$, the velocity and the deformation fulfill zero Dirichlet boundary conditions.
\subsection{Stabilization of Equal-order Finite Elements}
Because we use $Q_1$ finite elements for the pressure, velocity and the deformation, we have to stabilize the Stokes fluid equations on the fluid part. For this, stabilized Stokes elements as explained in Lemma 4.47 of \cite{richter_fsi1} are used.
\subsection{Three Parameter Discretization}
\label{subsection_3par_discr1}
Problem \cref{problem_linear_fsi1} is discretized with respect to 
\begin{align*}
20 &\text{ shear moduli } \mu_s^{i_1} \in [30000,50000] \text{, }\\
20 &\text{ first Lam\'{e} parameters } \lambda_s^{i_2} \in [100000,200000] \text{ and}\\
20 &\text{ fluid densities } \rho_f^{i_3} \in [50,200] \text{.}
\end{align*}
The kinematic fluid viscosity is fixed to $\nu_f=0.01$. The shear modulus and first Lam\'{e} parameter ranges cover solids with Poisson ratios between $\frac{1}{3}$ (e.g. concrete) and $0.43478$ (e.g. clay). The total number of equations is $m=20^3=8000$ and the number of degrees of freedom is $M=192423$. \par In the following computations, MATLAB 2017b on a CentOS 7.6.1810 64bit with 2 AMD EPYC 7501 and 512GB of RAM is used. The htucker MATLAB toolbox \cite{manual_kressner_htucker1} is used to realize the Tucker format $T_R$. The preconditioners are decomposed into a permuted LU decomposition using the MATLAB builtin command \texttt{lu()}. All methods start with a start matrix whose entries are all set to $1$.
\subsection{GMREST} A standard GMRES approach is compared with the GMRESTR method from \cref{algorithm_gmrestr_truncated1}.\par By ``standard GMRES approach'', the standard GMRES method applied to $m=8000$ different equations of the form \cref{equation_pardep_matrix2} is meant. It is once restarted after $8$ iterations so it uses a total of $16$ iterations per equation. For all $8000$ separate standard GMRES methods, $5$ preconditioners given by
 \begin{align}\label{gmrest_num_precond1}
A_0+(D_1)_{i,i}A_1+(D_2)_{i,i}A_2+(D_3)_{i,i}A_3 \text{ for } i \in \{800, 2400, 4000, 5600, 7200 \}
 \end{align}
are set up where the diagonal matrices $\{D_j\}_{j \in \{1,2,3\}}$ are the ones from \cref{equation_diagonal_samplemat1}. \par The GMRESTR method uses $6$ iterations per restart and is restarted $3$ times. The mean based preconditioner $\mathcal{P}_T$ is used. The times to compute the preconditioners (one in the case of GMRESTR, $5$ in the case of standard GMRES) can be found in the column Precon. in \cref{table_gmrest_gmres1}. The method itself took the time that is listed in the column Comp. and the column Total is then the sum of these times. Both methods result in $8000$ approximations.  Each of these approximations (x axis) then provides a certain accuracy (y axis) that is plotted in \cref{figure_gmrest_convergence}. The standard GMRES method applied to $8000$ equations in this way provides accuracies that are plotted in red within about $90$ hours and $32$ minutes. The approximations the GMRESTR method provides have accuracies that are plotted in blue. The GMRESTR method took only about $181$ minutes to compute these approximations as one can see in \cref{table_gmrest_gmres1}. Also, the storage that is needed to store the approximation varies significantly. The rank $200$ approximation, in the Tucker format, requires only about $306$MB whereas the full matrix requires almost $12$GB.
  \begin{figure}[tbhp]
\centering
\includegraphics[scale=.7]{./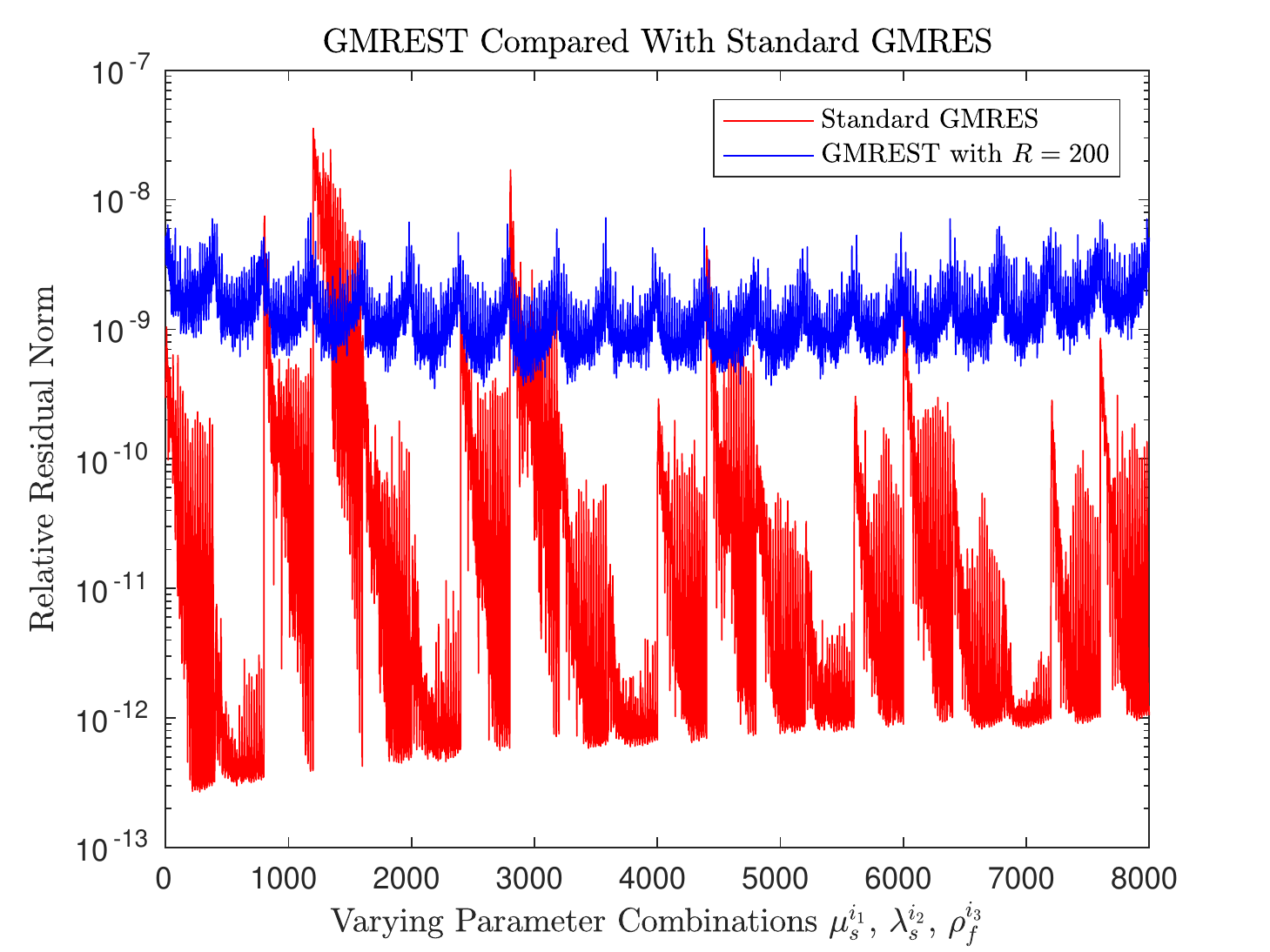}
\caption{The standard GMRES method applied to $8000$ separate equations (relative residual norms in red) is compared with the GMREST method (relative residual norms in blue). The relative residual norms are $\frac{\|b_D-A(\mu_s^{i_1},\lambda_s^{i_2},\rho_f^{i_3})x_{i_1,i_2,i_3}\|_2}{\|b_D\|_2}$, where $x_{i_1,i_2,i_3}$ is the approximation related to the parameters $\mu_s^{i_1}$, $\lambda_s^{i_2}$ and $\rho_f^{i_3}$.}
\label{figure_gmrest_convergence}
\end{figure}
\begin{table}[tbhp]
\caption{GMRESTR Compared With Standard GMRES}
\label{table_gmrest_gmres1}
\centering
\normalsize
\begin{tabular}{|c|c|c|c|c|} \hline
\bf Method & \bf Approx. &  \multicolumn{3}{c|}{\bf Computation Times}  \\ 
&\bf Storage &\multicolumn{3}{c|}{(in Minutes)}
\\ 
&&Precon.&Comp.&\bf Total
\\
\hline
\textbf{GMRESTR}& $O[(M+m+R)R]$ & & & \\
($R=200$)& $\approx 306.12$MB &1.24&179.88&\textbf{181.12}
\\
\hline 
\textbf{Standard GMRES}& $O(Mm)$ &   & &\\ 
($8000$ times)&$\approx 11.47$GB&6.63 & 5426.23 &\textbf{5432.86}
\\
\hline
\end{tabular}
\end{table}
\normalsize
\newpage
\subsection{ChebyshevT}\label{subsection_chebyshevt_numresults1}
Before the Chebyshev method can be applied, the extreme eigenvalues of $\mathcal{P}_T^{-1}\mathcal{A}$ have to be estimated as explained in \cref{subsection_chebyshev1}. An estimation of $\Lambda_\text{max}$ and $\Lambda_\text{min}$ from \cref{equation_lambdamaxmin_cheb1} involves the estimation of extreme eigenvalues for $m$ different matrices if the representation \cref{equation_set_eigenvalues1} is considered. But we restrict to an estimation of
\normalsize
\begin{align*}
\bar{\Lambda}_{\text{max}}&=\max \limits_{\substack{i_1 \in \{1,m_1\} \\ i_2 \in \{1,m_2 \} \\ i_3 \in \{1,m_3\}}} \Lambda\big( Bl(i_1,i_2,i_3) \big) \approx \Lambda_{\text{max}} \quad \text{and} \\ \bar{\Lambda}_{\text{min}}&=\min \limits_{\substack{i_1 \in \{1,m_1\} \\ i_2 \in \{1,m_2\} \\ i_3 \in \{1,m_3\} }} \Lambda\big(Bl(i_1,i_2,i_3)\big) \approx \Lambda_{\text{min}}  \text{.}
\end{align*}
\normalsize
With the mean based preconditioner $\mathcal{P}_T$, this leads to $d=1$ and $c=0.6$ in this configuration. The time needed to compute $\bar{\Lambda}_{\text{max}}$ and $\bar{\Lambda}_{\text{min}}$ on a coarse grid with $M=735$ degrees of freedom is listed in the column ``Est.'' in \cref{table_chebyshevt_gmres1}. \par In the same manner as in the preceding subsection, for comparison, a standard Cheyshev approach is applied to $8000$ equations of the form \cref{equation_pardep_matrix2}. The standard Chebyshev method uses $20$ iterations at each equation and, in total, the same $5$ preconditioners \cref{gmrest_num_precond1} as the standard GMRES uses. The ChebyshevT method iterates, in total, $24$ times and uses $\mathcal{P}_T$, the mean based preconditioner. The ChebyshevT method is restarted $3$ times with $6$ iterations per restart. Compared to this, $24$ iterations without restart took about the same time but provide approximation accuracies that are slightly worse.
\begin{figure}[tbhp]
\centering
\includegraphics[scale=.7]{./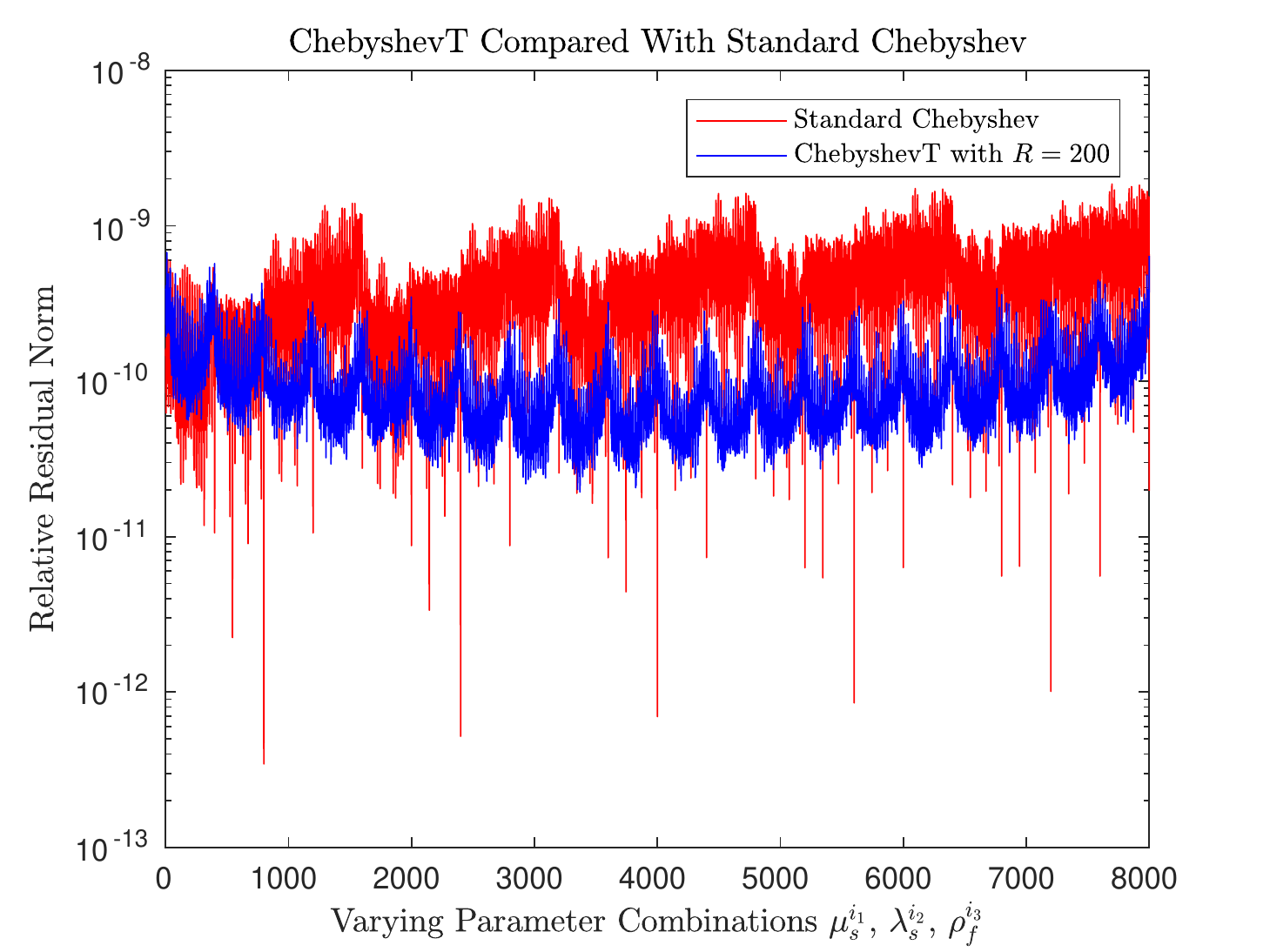}
\caption{The standard Chebyshev method applied to $8000$ separate equations (relative residual norms in red) is compared with the ChebyshevT method (relative residual norms in blue).}
\label{figure_chebyshevt_convergence}
\end{figure}
\begin{table}[tbhp]
\caption{ChebyshevT Compared With Standard Chebyshev}
\label{table_chebyshevt_gmres1}
\centering
\small
\begin{tabular}{|c|c|c|c|c|c|} \hline
\bf Method & \bf Approx. &  \multicolumn{4}{c|}{\bf Computation Times}  \\ 
&\bf Storage &\multicolumn{4}{c|}{(in Minutes)}
\\ 
& &Est.&Precon.&Comp.&\bf Total
\\
\hline
\textbf{ChebyshevT}& $O[(M+m+R)R]$ & & & & \\
($R=200$) & $\approx 306.12$MB &  0.013& 1.24 & 177.99&\textbf{179.243}
\\
\hline 
\textbf{Standard Chebyshev} & $O(Mm)$ &   & & &\\ 
 ($8000$ times)&$\approx 11.47$GB&$0.013$&$6.63$& $ 5490.85$ &$\boldsymbol{5497.493}$
\\
\hline
\end{tabular}%
\end{table}
\normalsize
\subsection{Comparison With the Bi-CGstab Method}
Another method that also works for non-symmetric matrices is the Bi-CGstab method \cite{paper_bicgstab_van_der_vorst}. It was not considered in the first place because it can break down under some circumstances as explained in Section 2.3.8 of \cite{book_barrett_templates1}. The preconditioned truncated variant similar to Algorithm 3 of \cite{paper_kressner_lowrank_krylov} but strictly based on \cite{paper_bicgstab_van_der_vorst} is compared with the GMRESTR and the ChebyshevT method. The truncated Bi-CGstab method is applied with $6$ iterations per restart. If once restarted, in total, the method iterates $12$ times. The resulting approximation accuracy is indeed better than the one obtained when iterating $12$ times directly without any restart.
\par To avoid early stagnation, the residual at step $i$ is computed directly
\begin{align*}
\hat{R}_i=\mathcal{T}\big(\hat{B}-F(\hat{X})\big)\text{.}
\end{align*}
The main reason why the Bi-CGstab takes more time, is the truncation. If $M$ and $m$ are big, truncation after every addition is indispensable. In this way, in every for-loop in the preconditioned Bi-CGstab Algorithm\cite{paper_bicgstab_van_der_vorst}, a total of $12$ truncations occur ($3$ times, $F(\cdot)$ is evaluated). In a for-loop of \cref{algorithm_chebyshevt_truncated1}, we have $6$ truncations only. In the outer for-loop of \cref{algorithm_gmres_truncated1}, $3+i$ truncations occur. It turned out that, in the implementation of the preconditioned Bi-CGStab method of  \cite{paper_bicgstab_van_der_vorst}, the truncation of the sum
\begin{align*}
s=r_{i-1}-\alpha v_i
\end{align*}
can be left out. Leaving out further truncations did not lead to a better performance of the truncated Bi-CGstab method.
\begin{figure}[tbhp]
\centering
\includegraphics[scale=.7]{./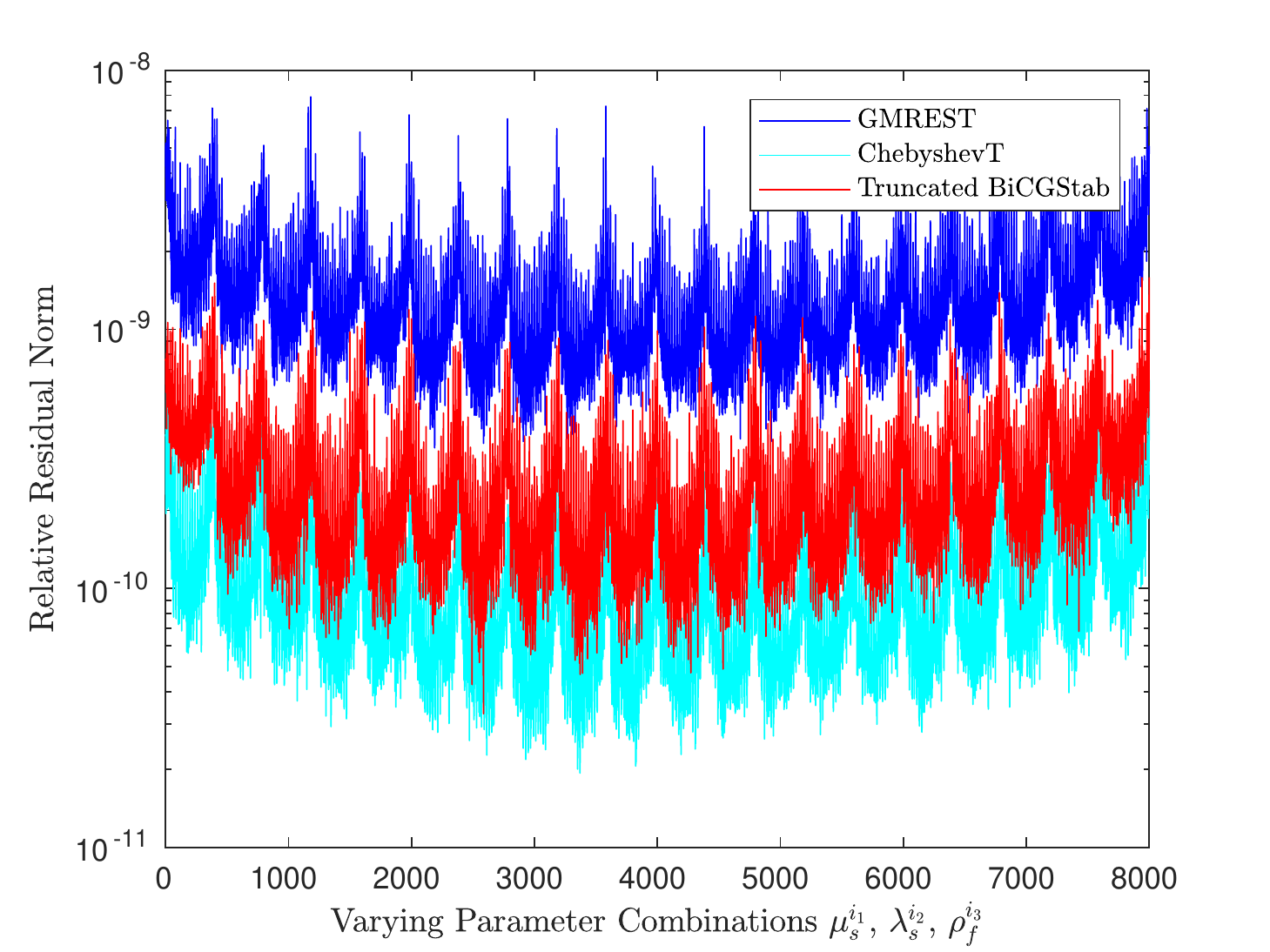}
\caption{The approximation accuracies for the GMREST (blue), the ChebyshevT (cyan) and the truncated Bi-CGstab (red).}
\label{figure_bicgstab_comparison}
\end{figure}
\begin{table}[tbhp]
\caption{Computation Time Comparison of the Truncated Approaches}
\label{table_trunc_comparison1}
\centering
\normalsize
\begin{tabular}{|c|c|c|c|c|} \hline
\bf Method &  \multicolumn{4}{c|}{\bf Computation Times}  \\ 
(R=200)&\multicolumn{4}{c|}{(in Minutes)}
\\ 
&Est.&Precon.&Comp.&\bf Total
\\
\hline
\textbf{ChebyshevT}& $0.013$ & $1.24$ & $177.99$ & $\boldsymbol{179.24}$ \\
\hline
\textbf{GMREST}& -& $1.24$&$179.88$ & $\boldsymbol{181.12}$ \\
\hline
\textbf{Truncated Bi-CGstab} &- & $1.24$ & $302.94$& $\boldsymbol{304.18 }$ \\
\hline
\end{tabular}
\end{table}
\section{Conclusions}
The truncated methods discussed in this paper provide approximations with relative residual norms smaller than $10^{-8}$ within less than a twentieth of the time needed by the correspondent standard approaches that solve the $m$ equations individually. This raises the question how these methods perform when applied to nonlinear problems. 
\par Since the truncation error affects, in addition to the machine precision error, the accuracy of the Arnoldi orthogonalization, the GMREST method should preferably be applied in a restarted version. Mostly, the ChebyshevT method is a bit faster and a bit more accurate than the GMREST method. But the main disadvantage of the ChebyshevT method is that the ellipse that contains the eigenvalues of $\mathcal{P}^{-1}\mathcal{A}$ described by the foci $d \pm c$ has to be approximated newly every time the parameter configuration changes. In this matter, the GMREST method can be seen as a method that is a bit more flexible if compared to the ChebyshevT method.
\par Also, the ChebyshevT and the truncated BiCGstab methods can and preferably should be applied in a restarted manner. If not restarted, the methods stagnate after a few iterations already. The reason is a numerical issue initiated by the bad condition of the mean-based preconditioner.
\par There is still further investigation needed regarding the error bounds. If the GMREST method is applied, the coefficients $c_j$ are not known. The ChebyshevT bound is rather pessimistic and merely of theoretical nature. The method seems to converge too fast such that the truncation error does not really play a role in the cases examined.

\section*{Acknowledgments}
We gratefully acknowledge funding received by the \linebreak Deutsche Forschungsgemeinschaft (DFG, German Research Foundation) - 314838170, GRK 2297 MathCoRe.
\appendix
\section{Error Bounds}  \label{appendix_error1}
\subsection{GMREST Error Bounds}
\label{appendix_error1_gmrest}
\begin{lemma}[Error Bound for Truncated Basis Elements] \label{append_lemma_gmres_basis1}
Under the assumptions of \cref{lemma_errors_basis_gmrest1}, with $\|\mathcal{E}_{\hat{r}_0}\|_2 \leq \epsilon$, it holds\begin{align*}
e_k \leq \epsilon \sum \limits_{j=1}^{k+1} \sigma_\mathcal{P}^{j-1}+\epsilon k \qquad \text{for} \qquad k \in \{0,...,l-1\} \text{.}
\end{align*}
\begin{proof}The case $k=0$ is clear. For $k \geq 1$, we have
\begin{align*}
e_k&=\| (\mathcal{P}^{-1} \mathcal{A})^k \mathcal{E}_{\hat{r}_0}+\mathcal{E}_{K^{\mathcal{T}_{k}}}+\sum \limits_{j=1}^{k-1} (\mathcal{P}^{-1}\mathcal{A})^j \mathcal{E}_{K^{\mathcal{T}_{k-j}}}  \|_2\\
& \leq \sigma_\mathcal{P}^k \epsilon +\epsilon+\sum \limits_{j=1}^{k-1} \sigma_\mathcal{P}^j \epsilon\\
&=\epsilon \sum \limits_{j=1}^{k+1} \sigma_\mathcal{P}^{j-1}    \text{.}
\end{align*}
The term $\epsilon k$ is to be added to the error bound as explained in \cref{remark_truncation_hatw1}.
\end{proof}
\end{lemma}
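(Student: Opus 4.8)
The plan is to re-run the argument of \cref{lemma_errors_basis_gmrest1} essentially verbatim, the only change being that the sharper hypothesis $\|\mathcal{E}_{\hat{r}_0}\|_2 \leq \epsilon$ now replaces $\|\mathcal{E}_{\hat{r}_0}\|_2 \leq \epsilon\|\mathcal{P}^{-1}\|_2$. First I would dispose of the base case $k=0$, which is even simpler than before: here $e_0 = \|\hat{r}_0 - r_0\|_2 = \|\mathcal{E}_{\hat{r}_0}\|_2 \leq \epsilon$, and this already equals $\epsilon\sum_{j=1}^{1}\sigma_{\mathcal{P}}^{j-1} + \epsilon\cdot 0$ under the empty-sum convention fixed in \cref{lemma_errors_basis_gmrest1}.

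For $k \geq 1$ I would start from the explicit representation of $K^{\mathcal{T}_k}$ furnished by \cref{lemma_basis_repr_gmrest1}, subtract the exact Krylov vector $(\mathcal{P}^{-1}\mathcal{A})^k r_0$, and reindex the truncation sum exactly as in \cref{lemma_errors_basis_gmrest1}: split off the $j=1$ summand $\mathcal{E}_{K^{\mathcal{T}_k}}$ and shift the remaining indices, so that
\[
e_k = \Big\| (\mathcal{P}^{-1}\mathcal{A})^k \mathcal{E}_{\hat{r}_0} + \mathcal{E}_{K^{\mathcal{T}_k}} + \sum_{j=1}^{k-1} (\mathcal{P}^{-1}\mathcal{A})^j \mathcal{E}_{K^{\mathcal{T}_{k-j}}} \Big\|_2.
\]
Applying the triangle inequality together with submultiplicativity and $\|\mathcal{P}^{-1}\mathcal{A}\|_2 = \sigma_{\mathcal{P}}$ then yields $e_k \leq \sigma_{\mathcal{P}}^k\|\mathcal{E}_{\hat{r}_0}\|_2 + \epsilon + \sum_{j=1}^{k-1}\sigma_{\mathcal{P}}^j\epsilon$.

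This last line is the single point where the new hypothesis enters: substituting $\|\mathcal{E}_{\hat{r}_0}\|_2 \leq \epsilon$ in place of $\epsilon\|\mathcal{P}^{-1}\|_2$ simply drops the $\|\mathcal{P}^{-1}\|_2$ amplification from the leading term. It then remains to collect the geometric contributions: I would write $\epsilon + \sum_{j=1}^{k-1}\sigma_{\mathcal{P}}^j\epsilon = \epsilon\sum_{j=0}^{k-1}\sigma_{\mathcal{P}}^{j}$ and absorb the leftover $\sigma_{\mathcal{P}}^k\epsilon$ to extend the geometric sum to $\epsilon\sum_{j=1}^{k+1}\sigma_{\mathcal{P}}^{j-1}$, which is the asserted bound before orthogonalization. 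Adding the orthogonalization contribution $\epsilon k$ documented in \cref{remark_truncation_hatw1} gives the claim.

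There is essentially no obstacle beyond the index bookkeeping in the reindexing step, which must be carried out carefully so that the $j=1$ term and the shifted tail line up. The point worth emphasizing is qualitative rather than technical: the resulting estimate is strictly sharper than \cref{lemma_errors_basis_gmrest1} precisely because solving the linear system for $\hat{r}_0$ \emph{before} truncation, rather than truncating the right-hand side first, removes the $\|\mathcal{P}^{-1}\|_2$ factor multiplying $\sigma_{\mathcal{P}}^k$, and this factor can be large for the badly conditioned preconditioners considered here.
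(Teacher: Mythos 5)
Your proposal is correct and follows essentially the same route as the paper's own proof: both start from the representation in \cref{lemma_basis_repr_gmrest1}, split off the $j=1$ term, apply the triangle inequality with $\sigma_{\mathcal{P}}=\|\mathcal{P}^{-1}\mathcal{A}\|_2$, collect the geometric sum into $\epsilon\sum_{j=1}^{k+1}\sigma_{\mathcal{P}}^{j-1}$, and append the $\epsilon k$ orthogonalization term from \cref{remark_truncation_hatw1}. The index bookkeeping and the base case are handled exactly as in the paper, so there is nothing to add.
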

\begin{theorem}[Approximation Error of GMREST]Under the assumptions of \cref{theorem_gmrest_error_bound1} and $\|\mathcal{E}_{\hat{r}_0}\|_2\leq \epsilon$, it holds
\begin{align*}
\|\hat{x}_l-x_l\|_2 &\leq \epsilon \sum \limits_{j=1}^l |d_j| \big( \sum \limits_{i=1}^{j} \sigma_\mathcal{P}^{i-1} +j-1 \big)+\sum \limits_{j=1}^l|c_j-d_j|+ \epsilon l \text{.}
\end{align*}
\begin{proof}
We use the proof of \cref{theorem_gmrest_error_bound1} and the error bound of \cref{append_lemma_gmres_basis1} to estimate
\begin{align*}
\|\hat{x}_l-x_l\|_2&\leq \sum \limits_{j=1}^l \big(|d_j| e_{j-1}+|c_j-d_j|\big) \\&\leq \epsilon  \sum \limits_{j=1}^l |d_j| \big( \sum \limits_{i=1}^j \sigma_{\mathcal{P}}^{i-1} +j-1 \big)    + \sum \limits_{j=1}^l |c_j-d_j| \text{.}
\end{align*}
Truncation in the last line of \cref{algorithm_gmres_truncated1} leads to the additional sum term $\epsilon l$.
\end{proof}
\end{theorem}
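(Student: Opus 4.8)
The plan is to re-run the argument of \cref{theorem_gmrest_error_bound1} essentially verbatim, the only change being that the sharper basis-error estimate of \cref{append_lemma_gmres_basis1}, valid under the stronger truncation assumption $\|\mathcal{E}_{\hat{r}_0}\|_2 \leq \epsilon$, replaces the one used there. Since the assumption enters the estimate solely through the sizes of the basis errors $e_k$, the skeleton of the proof is unaffected and only the final bookkeeping differs.

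First I would write the two iterates in their respective (non-normalized) spanning systems, exactly as in the discussion preceding \cref{theorem_gmrest_error_bound1}:
\[
x_l = x_0 + \sum_{j=1}^l c_j (\mathcal{P}^{-1}\mathcal{A})^{j-1} r_0, \qquad
\hat{x}_l = \hat{x}_0 + d_1 \hat{r}_0 + \sum_{j=2}^l d_j K^{\mathcal{T}_{j-1}},
\]
with $\hat{x}_0 = x_0$. Subtracting and applying the triangle inequality after pairing the $j$th coefficients, I would split each term as $d_j B^{\mathcal{T}}_j - c_j B_j = d_j(B^{\mathcal{T}}_j - B_j) + (d_j - c_j)B_j$, where $B^{\mathcal{T}}_j$ denotes the truncated basis element and $B_j$ the exact one. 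The first part is bounded by $|d_j|\,e_{j-1}$, and the second by $|c_j - d_j|\,\|B_j\|_2$. Invoking the standing convention of \cref{remark_truncation_hatw1} that the exact GMRES basis is orthonormal, each $\|B_j\|_2 = 1$, so the coefficient-mismatch part contributes exactly $\sum_{j=1}^l |c_j - d_j|$. This yields
\[
\|\hat{x}_l - x_l\|_2 \leq \sum_{j=1}^l \big( |d_j|\,e_{j-1} + |c_j - d_j| \big).
\]

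Next I would insert the estimate of \cref{append_lemma_gmres_basis1} with the index shift $k = j-1$, namely $e_{j-1} \leq \epsilon \sum_{i=1}^{j} \sigma_\mathcal{P}^{i-1} + \epsilon(j-1)$, which reproduces the first two summands of the claimed bound. The remaining $\epsilon l$ arises because the final reconstruction $\hat{X} = \mathcal{T}\big(\hat{X} + \sum_j y_j \hat{V}_j\big)$ of \cref{algorithm_gmres_truncated1} is, to keep the rank small, carried out by adding the $l$ terms consecutively with a truncation after each addition, contributing one error of norm at most $\epsilon$ per term.

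I expect no genuine analytical obstacle here, since this is a mild variant of \cref{theorem_gmrest_error_bound1}; the only delicate points are purely in the accounting. One must keep distinct the two sources of $\epsilon$: the $\epsilon(j-1)$ inside the bracket, which originates from the orthogonalization errors $\mathcal{E}_{\hat{W}}$ already folded into \cref{append_lemma_gmres_basis1} through \cref{remark_truncation_hatw1}, and the trailing $\epsilon l$, which is a separate final-step error. The hard part, such as it is, will be carrying the index shift $k \mapsto j-1$ correctly so that the upper limit of the inner geometric sum lands on $j$ rather than $j+1$, and confirming that the absence of the $\|\mathcal{P}^{-1}\|_2$ factor (present in the more practical case of \cref{theorem_gmrest_error_bound1}) is exactly what the assumption $\|\mathcal{E}_{\hat{r}_0}\|_2 \leq \epsilon$ buys us in \cref{append_lemma_gmres_basis1}.
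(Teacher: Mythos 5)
Your proposal is correct and follows essentially the same route as the paper: the paper likewise reuses the decomposition $\|\hat{x}_l-x_l\|_2\leq\sum_{j=1}^l(|d_j|e_{j-1}+|c_j-d_j|)$ from \cref{theorem_gmrest_error_bound1}, substitutes the sharper bound $e_{j-1}\leq\epsilon\sum_{i=1}^{j}\sigma_\mathcal{P}^{i-1}+\epsilon(j-1)$ from \cref{append_lemma_gmres_basis1}, and attributes the trailing $\epsilon l$ to the successive truncations in the final reconstruction step of \cref{algorithm_gmres_truncated1}. Your index shift $k=j-1$ and the observation that the assumption $\|\mathcal{E}_{\hat{r}_0}\|_2\leq\epsilon$ is precisely what removes the $\|\mathcal{P}^{-1}\|_2$ factor are both accurate.
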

\subsection{ChebyshevT Error Bounds}\label{appendix_error1_chebyshevt}
\begin{theorem}[Approximation Error of ChebyshevT]
Under the assumptions of \cref{theorem_chebyshevt_appr_error1} and $\|\mathcal{E}_{\hat{r}_i}\|_2 \leq \epsilon$ for all $i \in \{1,...,l\}$, the error bounds of the ChebyshevT method translate to
\begin{align*}
e_1&:=\|\hat{x}_l-x_l\|_2=\|\mathcal{E}_{\hat{x}_1^a}\|_2\leq \epsilon \big(1+\frac{1}{|d|}\big)\text{,}\\
e_2&:=\|\hat{x}_2-x_2\|_2 \leq \epsilon \big(3+|\alpha_1|\sigma_\mathcal{P} + |\alpha_1|+\frac{1+|\beta_1|+|\alpha_1|\sigma_\mathcal{P}}{|d|} \big) \quad \text{and}\\
e_l &:=\|\hat{x}_l-x_l \|_2\leq \big(1+|\alpha_{l-1}|\sigma_\mathcal{P}\big)e_{l-1}+\sum \limits_{j=1}^{l-2} |\alpha_j|e_j \sigma_\mathcal{P} \prod \limits_{i=1}^{l-j-1}|\beta_{i+j}|\\
&\hspace*{2.5cm}+ \epsilon \Big(2+\sum \limits_{j=1}^{l-2} \prod \limits_{i=1}^{l-j-1}|\beta_{i+j}|+ \sum \limits_{j=1 }^{l-1} |\alpha_j|\prod \limits_{i=1}^{l-j-1}|\beta_{i+j}| +\frac{\prod \limits_{j=1}^{l-1} |\beta_j|}{|d|}  \Big) \\ & \hspace{2.5cm}\text{for}  \quad l \geq 3 \text{.}
\end{align*}
\end{theorem}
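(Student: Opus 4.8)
The plan is to observe that this appendix theorem differs from \cref{theorem_chebyshevt_appr_error1} only in the final quantitative bound placed on the residual truncation errors, and therefore to reuse the entire derivation of that theorem verbatim up to the last substitution. The structural representation of the iterates established in \cref{lemma_representation_chebyshevt_iterates} does not depend on how $\|\mathcal{E}_{\hat{r}_i}\|_2$ is estimated; it only records where each error term $\mathcal{E}_{\hat{r}_i}$, $\mathcal{E}_{\hat{x}_i}$ and $\mathcal{E}_{\hat{\Phi}_i}$ enters the total error $\mathcal{E}_{\hat{x}_l^a}=\hat{x}_l-x_l$. Hence my starting point is exactly the three recursive formulas for $\mathcal{E}_{\hat{x}_1^a}$, $\mathcal{E}_{\hat{x}_2^a}$ and $\mathcal{E}_{\hat{x}_l^a}$ ($l\geq 3$) from that lemma.

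First I would reintroduce an auxiliary constant $\epsilon_R>0$ with $\|\mathcal{E}_{\hat{r}_i}\|_2\leq \epsilon_R$ for all $i$, and carry out precisely the same chain of triangle-inequality and submultiplicativity estimates as in the proof of \cref{theorem_chebyshevt_appr_error1}. Concretely, for $l=1$ applying the norm to $\mathcal{E}_{\hat{x}_1^a}=\frac{1}{d}\mathcal{E}_{\hat{r}_0}+\mathcal{E}_{\hat{x}_1}$ and using $\|\mathcal{E}_{\hat{x}_1}\|_2\leq\epsilon$ gives $e_1\leq \epsilon+\frac{1}{|d|}\epsilon_R$; for $l=2$ the grouping of the terms in $\mathcal{E}_{\hat{x}_2^a}$, together with $\|\mathcal{P}^{-1}\mathcal{A}\|_2=\sigma_\mathcal{P}$ and $\|\mathcal{E}_{\hat{x}_i}\|_2,\|\mathcal{E}_{\hat{\Phi}_i}\|_2\leq\epsilon$, yields $e_2\leq(3+|\alpha_1|\sigma_\mathcal{P})\epsilon+(|\alpha_1|+\frac{1+|\beta_1|+|\alpha_1|\sigma_\mathcal{P}}{|d|})\epsilon_R$; and for $l\geq 3$ the triangle inequality applied term-by-term to the representation of $\mathcal{E}_{\hat{x}_l^a}$, identifying $\|\mathcal{E}_{\hat{x}_j^a}\|_2=e_j$ and bounding each pure truncation error by $\epsilon$, produces the recursive bound in which $\epsilon$ multiplies $2+\sum_{j=1}^{l-2}\prod_{i=1}^{l-j-1}|\beta_{i+j}|$ and $\epsilon_R$ multiplies $\sum_{j=1}^{l-1}|\alpha_j|\prod_{i=1}^{l-j-1}|\beta_{i+j}|+\frac{\prod_{j=1}^{l-1}|\beta_j|}{|d|}$.

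The only genuine difference from \cref{theorem_chebyshevt_appr_error1} is the closing step: instead of inserting $\epsilon_R\leq \epsilon\|\mathcal{P}^{-1}\|_2$, which corresponds to truncating the right-hand side of the residual system before solving, I now use the hypothesis $\|\mathcal{E}_{\hat{r}_i}\|_2\leq \epsilon$ directly, i.e.\ $\epsilon_R=\epsilon$. This corresponds to the case where the linear system $\mathcal{P}r_i=b-\mathcal{A}x_i$ is solved \emph{before} truncation, as discussed in \cref{remark_cheby_precond_eps1} and in analogy to \cref{remark_truncation_solve1}. Substituting $\epsilon_R=\epsilon$ merges the $\epsilon$ and $\epsilon_R$ coefficients into a single $\epsilon$-prefactor and gives exactly the three claimed inequalities, the factor $\|\mathcal{P}^{-1}\|_2$ disappearing throughout.

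There is no real obstacle here: the content is entirely a careful bookkeeping of which error contributions are controlled by the truncation accuracy $\epsilon$ versus the residual accuracy $\epsilon_R$, and the result follows as a direct corollary of the estimates underlying \cref{theorem_chebyshevt_appr_error1}. The one point demanding care is to keep the empty-product convention $\prod_{j\in\emptyset}\beta_j=1$ consistent so that the $l\geq 3$ recursion specializes correctly, and to make sure that the sharper residual bound is applied uniformly to every occurrence of $\mathcal{E}_{\hat{r}_i}$ (including the $\frac{1}{d}\mathcal{E}_{\hat{r}_0}$ boundary term) rather than only to the generic terms.
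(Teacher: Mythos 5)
Your proposal is correct and matches the paper's own proof exactly: the paper likewise reuses the derivation of \cref{theorem_chebyshevt_appr_error1} with the auxiliary bound $\epsilon_R$ and simply closes with $\epsilon_R \leq \epsilon$ in place of $\epsilon_R \leq \epsilon\|\mathcal{P}^{-1}\|_2$. Nothing further is needed.
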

\begin{proof}
For all cases, we can use the proof of \cref{theorem_chebyshevt_appr_error1} and estimate\linebreak  $\epsilon_R \leq \epsilon$.
\end{proof}

\begin{thebibliography}{10}

\bibitem{paper_gmres_ballani}
{\sc J.~Ballani and L.~Grasedyck}, {\em A projection method to solve linear
  systems in tensor format}, Numer. Linear Algebra Appl., 20 (2013),
  pp.~27--43.

\bibitem{book_barrett_templates1}
{\sc R.~Barrett, M.~Berry, T.~F. Chan, and et~al.}, {\em Templates for the
  Solution of Linear Systems: Building Blocks for Iterative Methods}, Society
  for Industrial and Applied Mathematics (SIAM), Philadelphia, PA, 1994.

\bibitem{bennbreit13}
{\sc P.~Benner and T.~Breiten}, {\em Low rank methods for a class of
  generalized {L}yapunov equations and related issues}, Numer. Math., 124
  (2013), pp.~441--470.

\bibitem{paper_chebyshev_calvetti}
{\sc D.~Calvetti, G.~H. Golub, and L.~Reichel}, {\em An adaptive {C}hebyshev
  iterative method for nonsymmetric linear systems based on modified moments},
  Numer. Math., 67 (1994), pp.~21--40.

\bibitem{book_golub_matrix1}
{\sc G.~H. Golub and C.~F. Van~Loan}, {\em Matrix Computations}, Johns Hopkins
  Studies in the Mathematical Sciences, Johns Hopkins University Press,
  Baltimore, MD, fourth~ed., 2013.

\bibitem{paper_grasedyck_hierarchical_svd_of_tensors}
{\sc L.~Grasedyck}, {\em Hierarchical singular value decomposition of tensors},
  SIAM J. Matrix Anal. Appl., 31 (2009/10), pp.~2029--2054.

\bibitem{hackbusch_tensor1}
{\sc W.~Hackbusch}, {\em Tensor Spaces and Numerical Tensor Calculus}, vol.~42
  of Springer Series in Computational Mathematics, Springer, Heidelberg, 2012.

\bibitem{paper_hackbusch_khorom_approximate_iterations}
{\sc W.~Hackbusch, B.~N. Khoromskij, and E.~E. Tyrtyshnikov}, {\em Approximate
  iterations for structured matrices}, Numer. Math., 109 (2008), pp.~365--383.

\bibitem{paper_kressner_lowrank_krylov}
{\sc D.~Kressner and C.~Tobler}, {\em Low-rank tensor {K}rylov subspace methods
  for parametrized linear systems}, SIAM J. Matrix Anal. Appl., 32 (2011),
  pp.~1288--1316.

\bibitem{manual_kressner_htucker1}
\leavevmode\vrule height 2pt depth -1.6pt width 23pt, {\em Algorithm 941: {\tt
  htucker}--a {M}atlab toolbox for tensors in hierarchical {T}ucker format},
  ACM Trans. Math. Software, 40 (2014), pp.~Art. 22, 22.

\bibitem{paper_chebyshev_manteuffel}
{\sc T.~A. Manteuffel}, {\em The {T}chebychev iteration for nonsymmetric linear
  systems}, Numer. Math., 28 (1977), pp.~307--327.

\bibitem{paper_oseledets_tensor_train1}
{\sc I.~V. Oseledets}, {\em Tensor-train decomposition}, SIAM J. Sci. Comput.,
  33 (2011), pp.~2295--2317.

\bibitem{richter_fsi1}
{\sc T.~Richter}, {\em Fluid-structure Interactions}, vol.~118 of Lecture Notes
  in Computational Science and Engineering, Springer, Cham, 2017.

\bibitem{saad_iterative}
{\sc Y.~Saad}, {\em Iterative Methods for Sparse Linear Systems}, Society for
  Industrial and Applied Mathematics, Philadelphia, PA, second~ed., 2003.

\bibitem{paper_gmres_saad_schultz}
{\sc Y.~Saad and M.~H. Schultz}, {\em G{MRES}: a generalized minimal residual
  algorithm for solving nonsymmetric linear systems}, SIAM J. Sci. Statist.
  Comput., 7 (1986), pp.~856--869.

\bibitem{paper_simoncini_inexact_krylov1}
{\sc V.~Simoncini and D.~B. Szyld}, {\em Theory of inexact {K}rylov subspace
  methods and applications to scientific computing}, SIAM J. Sci. Comput., 25
  (2003), pp.~454--477.

\bibitem{trefethen_numlinalg1}
{\sc L.~N. Trefethen and D.~Bau, III}, {\em Numerical Linear Algebra}, Society
  for Industrial and Applied Mathematics (SIAM), Philadelphia, PA, 1997.

\bibitem{paper_bicgstab_van_der_vorst}
{\sc H.~A. van~der Vorst}, {\em Bi-{CGSTAB}: a fast and smoothly converging
  variant of {B}i-{CG} for the solution of nonsymmetric linear systems}, SIAM
  J. Sci. Statist. Comput., 13 (1992), pp.~631--644.

\end{thebibliography}

\end{document}